\newtheorem{theorem}{Theorem}[section]
\newtheorem{lemma}[theorem]{Lemma}
\newtheorem{proposition}{Proposition}
\renewcommand{\epsilon}{\varepsilon}
\renewcommand{\rho}{\varrho}
\renewcommand{\phi}{\varphi}
\newcommand{\DS}{\displaystyle}
\newcommand{\N}{{\mathbb N}}
\newcommand{\Z}{{\mathbb Z}}
\newcommand{\R}{{\mathbb R}}
\newcommand{\cA}{{\mathcal A}}
\newcommand{\cB}{{\mathcal B}}
\newcommand{\cG}{{\mathcal G}}
\newcommand{\cH}{{\mathcal H}}
\newcommand{\cL}{{\mathcal L}}
\newcommand{\cP}{{\mathcal P}}
\newcommand{\cX}{{\mathcal X}}
\newcommand{\cY}{{\mathcal Y}}
\begin{document}

\title[Equilibrium Validation Based on Sobolev Embeddings]
      {Equilibrium Validation in Models for Pattern Formation
       Based on Sobolev Embeddings}

\author{Evelyn Sander}
\address{Department of Mathematical Sciences\\
        George Mason University\\
        Fairfax, VA 22030, USA}
\email{esander@gmu.edu}

\author{Thomas Wanner}
\address{Department of Mathematical Sciences\\
        George Mason University\\
        Fairfax, VA 22030, USA}
\email{twanner@gmu.edu}

\subjclass[2010]{Primary: 35B40, 35B41, 35K55, 37M20, 65G20; Secondary: 65G30, 65N35, 74N99.}
\keywords{Parabolic partial differential equation, pattern formation in
  equilibria, bifurcation diagram, saddle-node bifurcations, computer-assisted
  proof, constructive implicit function theorem, Ohta-Kawasaki model.}

\maketitle

\begin{abstract}
In the study of equilibrium solutions for partial differential equations there
are so many equilibria that one cannot hope to find them all. Therefore one
usually concentrates on finding individual branches of equilibrium solutions. On
the one hand, a rigorous theoretical understanding of these branches is ideal
but not generally tractable. On the other hand, numerical bifurcation searches are
useful but not guaranteed to give an accurate structure,  in that they could miss
a portion of a branch or find a spurious branch where none exists. In a series
of recent papers, we have aimed for a third option. Namely, we have developed a
method of computer-assisted proofs to prove both existence and isolation of
branches of equilibrium solutions. In the current paper, we extend these techniques
to the Ohta-Kawasaki model for the dynamics of diblock copolymers in dimensions
one, two, and  three, by giving a detailed description of the analytical
underpinnings of the method. Although the paper concentrates on applying
the method to the Ohta-Kawasaki model, the functional analytic approach
and techniques can be generalized to other parabolic partial differential
equations.
\end{abstract}

\section{Introduction}
\label{secintro}
The goal of this paper is to present the theoretical underpinnings for
computer-assisted branch validation using functional analytic techniques 
including the constructive implicit function theorem and Neumann series methods, 
such that pointwise estimates result in solution branch validation. 
While the individual proof techniques presented here are not novel,
we present this approach in a modular way such that it is flexible, adaptable, 
and as computationally feasible as possible in more than one space dimension. 
In particular, we apply this methodology in the case of the Ohta--Kawasaki
model for diblock copolymers~\cite{ohta:kawasaki:86a}. Diblock copolymers 
are formed by the chemical reaction of two linear polymers (known as
blocks) which contain different monomers. Whenever the blocks are
thermodynamically incompatible, the blocks are forced to separate
after the reaction, but since the blocks are covalently bonded they
cannot separate on a macroscopic scale. The competition between these
long-range and short-range forces causes {\em microphase separation},
resulting in pattern formation on a mesoscopic scale.

We study the Ohta-Kawasaki equation in the case of homogeneous Neumann
boundary conditions on rectilinear domains $\Omega$ in dimensions one,
two, and three, which is given by 
\begin{eqnarray*}
  w_t &=& -\Delta ( \Delta w + \lambda f(w) ) - \lambda \sigma (w - \mu)
  \quad\mbox{ in } \Omega \;, \\[1.5ex]
  & & \frac{ \partial w }{\partial \nu } =
    \frac{\partial (\Delta w)}{\partial \nu }= 0
    \quad\mbox{ on } \partial \Omega \; . 
\end{eqnarray*}
The notation~$\nu$ denotes the unit outward normal on the boundary
of~$\Omega$ --- corresponding to homogeneous Neumann boundary conditions.
The quantity $w(t,x)$ is the local average density of the two blocks.
The parameter $\mu$ is the space average of~$w$, meaning it is a measure
of the relative total proportion of the two polymers, which we tersely
refer to as the {\em mass} of the system.  The equation obeys a mass
conservation,  implying that~$\mu$ is time-invariant. A large value of
parameter~$\lambda$  corresponds to a large short-range repulsion,
while a large value of the parameter~$\sigma$ corresponds to large
long-range elasticity forces. We refer the reader
to~\cite{johnson:etal:13a} for a detailed description of
how~$\lambda$ and~$\sigma$ are defined. 
The nonlinear function $f: \R \to \R$ is often assumed to be $f(u) = u-u^3$, but 
the results in this paper still apply as long as $f$ is a  $C^2$-function. 
Finally, note that the second
boundary condition is necessary since this is a fourth order equation. 
In this paper, we focus on equilibrium solutions $w = w(x)$. 

For notational convenience, we reformulate our equation slightly. 
For a solution~$w$ of the diblock copolymer equation, we define
$u = w - \mu$. Since the space average of~$w$ is~$\mu$, the average
of the shifted function~$u$ is zero. Therefore the equilibrium equation
becomes
\begin{eqnarray} \label{eqn:dbcp}
-\Delta ( \Delta u + \lambda f(u+\mu) ) - \lambda \sigma u  &=& 0 \quad\mbox{ in } \Omega \;, \nonumber\\[1.5ex]
\frac{ \partial u }{\partial \nu }=  \frac{ \partial (\Delta u) }{\partial \nu }&=& 0 \quad\mbox{ on } \partial \Omega \;,  \\[1.5ex]
\int_\Omega u \;dx &=& 0 \nonumber \;. 
\end{eqnarray}
We will use this version of the equation  for the rest of the paper. We focus
on solutions to this equation as we vary any of the three parameters: the
degree of short-range repulsion $\lambda$, the mass $\mu$, and the degree of 
long-range elasticity  $\sigma$. Our main goal is to establish  bounds that
make it possible to use a functional analytic approach to rigorous validation
using the point of view of the constructive implicit function theorem which we
have already developed in previous work~\cite{sander:wanner:16a, wanner:16a,
wanner:17a, wanner:18a}. Our bounds are developed mostly using theoretical
techniques, but in the case of Sobolev embeddings, the bounds themselves are
developed using computer-assisted means. This method is designed for validated
continuation of branches of solutions which depend on a parameter, in the
spirit of the numerical method of pseudo-arclength continuation, such as
seen in the software packages AUTO~\cite{doedel:81a} and Matcont~\cite{matcont}.
Successive application of this theorem allows us to validate branches of
equilibrium solutions by giving precise bounds on both the branch
approximation error and isolation. This is much more powerful than
only validating individual solutions along a branch, since it allows
us to guarantee that a set of solutions lie along the same connected
branch component. 

In order to establish what is new in this paper, we give a brief discussion
of previous results. A number of papers have previously considered numerical
computation of bifurcation diagrams for the Ohta-Kawasaki and Cahn-Hilliard 
equations, such as for example~\cite{choksi:etal:11a, choksi:peletier:09a,
choksi:ren:03a, choksi:ren:05a, desi:etal:11a, johnson:etal:13a, maier:etal:08a,
maier:etal:07a}. There are also several decades of results on computer validation
for dynamical systems and differential equations solutions which combine fixed
point arguments and interval arithmetic; see for example~\cite{arioli:koch:10a,
day:etal:07b, gameiro:etal:08a, plum:95a, plum:96a, plum:09a, rump:10a, wanner:16a,
wanner:17a}. A constructive implicit function theorem was formulated in the work
of Chierchia~\cite{chierchia:03a}. Our approach follows most closely the work of
Plum~\cite{nakao:etal:19a, plum:95a, plum:96a,plum:09a}, in which functional
analytic approaches are given for establishing needed apriori bounds. Such
methods have also been applied by Yamamoto~\cite{yamamoto:98a, yamamoto:etal:11a}.
In our previous work on the constructive implicit function theorem, our goal has
been to give a systematic procedure for adapting these works to the context of
parameter continuation. There are several papers that have already considered
rigorous validation of parameter-dependent solutions for the Ohta-Kawasaki 
model~\cite{cai:watanabe:19a, cyranka:wanner:18a, lessard:sander:wanner:17a,
sander:wanner:16a, vandenberg:williams:17a, vandenberg:williams:p19b,
vandenberg:williams:19a, wanner:16a, wanner:17a, wanner:18a}. Many of these
papers also include methods of bounding the terms in a generalized Fourier
series, and the estimates on the tail. However, it was necessary to make
quite substantial ad hoc calculations in order to establish needed bounds
before it is possible to proceed with numerical validation.

Our goal in the current paper is to establish a set of flexible
bounds on the size of the inverse of the derivative, the required
truncation dimension, Lipschitz bounds on the equations with respect to all
parameters, as well as constructive Sobolev embedding constant bounds for
comparison to the $L^\infty$-norm, meaning that equilibrium verifications
along branch segments can be done without having to resort to ad hoc
calculations which crucially depend on the specific nonlinearity.
More precisely, we obtain the following:
\begin{itemize}
\item The approach of this paper derives general estimates that work in one,
two, and three space dimensions, and under the natural homogeneous Neumann
boundary conditions. This is in contrast to~\cite{cai:watanabe:19a}
and~\cite{wanner:17a}, which only considered the case of one-dimensional
domains, or to~\cite{vandenberg:williams:p19b, vandenberg:williams:19a}, which
considered the three-dimensional case only under periodic boundary conditions
and symmetry constraints.
\item Our approach uses the natural functional analytic setting for the
diblock copolymer evolution equation, which is based on the Sobolev space
of twice weakly differentiable functions. This is in contrast
to~\cite{vandenberg:williams:p19b, vandenberg:williams:19a}, which seek 
the equilibria in spaces of analytic functions.
\item As part of our approach, we obtain accurate upper bounds for the
operator norm of the inverse of the diblock copolymer Fr\'echet derivative.
For this estimate, we use the natural Sobolev norms of the underlying
problem. In contrast to~\cite{kinoshita:etal:19a,
watanabe:etal:19a, watanabe:etal:16a} our method is based on Neumann
series.
\end{itemize}
Throughout this paper, we focus on the theoretical underpinnings
which allow one to apply the constructive implicit function
theorem~\cite{sander:wanner:16a}. Due to space constraints, we leave
the practical application of these results to path-following with
slanted boxes as in~\cite{sander:wanner:16a}, as well as extensions
to pseudo-arclength continuation, for future work. Nevertheless, while
this paper is focussed only on the Ohta-Kawasaki model, the general
approach can be used for other parabolic partial differential
equations as well. 

The remainder of this paper is organized as follows. In
Section~\ref{sec:tbd}, we introduce the necessary functional analytic
framework, while Section~\ref{subsec:die} is devoted to finding bounds
on the operator norm of the inverse of the linearized operator. After
that, Section~\ref{sec:cift} establishes Lipschitz bounds on the diblock
copolymer operator for continuation with respect to any of the three
parameters~$\lambda$, $\sigma$, and~$\mu$, before in Section~\ref{sec:eg}
we give a brief numerical illustration of how this method rigorously
establishes a variety of equilibrium branch pieces for the Ohta-Kawasaki
model in multiple dimensions. Finally, in Section~\ref{sec:theend} we
wrap up with conclusions and future plans.
\section{Basic definitions and setup}
\label{sec:tbd}
In this section, we establish notation and crucial auxiliary bounds. In
Section~\ref{sec:ciftstatement} we recall the constructive implicit function
theorem, before in Section~\ref{sec:fun} we define the function spaces that will
be used in our computer-assisted proofs. These spaces are particularly adapted for
the use with Fourier series expansions to represent functions with Neumann
boundary conditions and zero average. In Section~\ref{sec:sob}, we
collect a set of Sobolev embedding results giving precise rigorous
bounds on the similarity constants for passing between equivalent norms
on these function spaces. Finally, in Section~\ref{sec:proj} we 
introduce the necessary finite-dimensional spaces and associated
projection operators that are used in our computer-assisted proofs.
\subsection{The constructive implicit function theorem}
\label{sec:ciftstatement}
In this section we state a constructive implicit function theorem that makes
it possible to validate a branch of solutions changing with respect to a
parameter. This theorem appears in~\cite{sander:wanner:16a}, where we
demonstrated the validation of solutions for the lattice Allen-Cahn equation. 
The theorem is based on previous work of Plum~\cite{plum:09a} and
Wanner~\cite{wanner:17a}. To put this in context, our overarching goal
is to find a connected curve of values~$(\alpha,x)$ in the zero set for a 
specific nonlinear operator~$\cG(\alpha,x)$. In this paper, the zero set
consists of the equilibria of the Ohta-Kawasaki equation. Starting at a point
for which the operator~$\cG$ is close to zero, we use the theorem as the
iterative step in a validated continuation. That is, we iteratively validate
small portions along the solution curve, each time using the constructive
implicit function theorem which is stated below. We also validate that these
portions combine to create a piece of a single connected solution curve, 
and show that it is isolated from any other branch of the solution curve. 
Rather than getting bogged down in the details of the iterative process, we 
first concentrate on the single iterative step and the estimates needed in
order to perform it. Specifically, we consider solutions to the equation 
\begin{equation} \label{nift:nleqn}
  \cG(\alpha,x) = 0 \; ,
\end{equation}
where $\cG : \cP \times \cX \to \cY$ is a Fr\'echet differentiable
nonlinear operator between two Banach spaces~$\cX$ and~$\cY$, and
the parameter~$\alpha$ is taken from a Banach space~$\cP$. The
norms on these Banach spaces are denoted by~$\|\cdot\|_\cP$,
$\|\cdot\|_\cX$, and~$\|\cdot\|_\cY$, respectively. One possible
choice of~$\cG$ would be to directly use the nonlinear operator
associated with~(\ref{eqn:dbcp}), but this is not a numerically
viable option for validation of a branch of solutions. Instead we
will introduce an extended system which gives a validated version of
pseudo-arclength continuation. The system contains not only the
Ohta-Kawasaki model equilibrium equation, but is in a way designed
to optimize the needed number of validation steps. 

In order to present the constructive implicit function theorem in
detail, we begin by making the following hypotheses. 
For the classical implicit function theorem, the existence of constants 
satisfying the hypotheses given below is sufficient. In contrast, since  we wish to 
use a computer assisted proof to validate existence of equilibria with
specified error bounds, we require
explicit values for each of the constants in (H1)--(H4). 

\begin{itemize}
\item[(H1)] Unlike the traditional implicit function theorem, we assume 
only an approximate solution to the equation. That is, assume that we
are given a pair~$(\alpha^*,x^*) \in \cP \times \cX$ which is an
approximate solution of the nonlinear problem~(\ref{nift:nleqn}). 
More precisely, the residual of the nonlinear operator~$\cG$ at the
pair~$(\alpha^*,x^*)$ is small, i.e., there exists a constant
$\rho > 0$ such that
\begin{displaymath}
  \left\| \cG(\alpha^*,x^*) \right\|_\cY \le \rho \; .
\end{displaymath}
\item[(H2)] Assume that the operator~$D_x\cG(\alpha^*,x^*)$
is invertible and not very close to being singular. That is, 
the Fr\'echet derivative~$D_x\cG(\alpha^*,x^*) \in \cL(\cX,\cY)$,
where~$\cL(\cX,\cY)$ denotes the Banach space of all bounded
linear operators from~$\cX$ into~$\cY$, is one-to-one and
onto, and its inverse~$D_x\cG(\alpha^*,x^*)^{-1} : \cY \to \cX$
is bounded and satisfies
\begin{displaymath}
  \left\| D_x\cG(\alpha^*,x^*)^{-1} \right\|_{\cL(\cY,\cX)} \le K
  \; ,
\end{displaymath}
where~$\| \cdot \|_{\cL(\cY,\cX)}$ denotes the operator norm
in~$\cL(\cY,\cX)$. 
\item[(H3)] For~$(\alpha,x)$ close to~$(\alpha^*,x^*)$,
the Fr\'echet derivative~$D_x\cG(\alpha,x)$ is locally
Lipschitz continuous in the following sense. There exist
positive real constants~$L_1$, $L_2$, $\ell_x$,
and~$\ell_\alpha \ge 0$ such that for all pairs $(\alpha,x)
\in \cP \times \cX$ with $\| x - x^* \|_\cX \le \ell_x$ and
$\|\alpha - \alpha^*\|_\cP \le \ell_\alpha$ we have
\begin{displaymath}
  \left\| D_x\cG(\alpha,x) -
    D_x\cG(\alpha^*,x^*) \right\|_{\cL(\cX,\cY)} \le
    L_1 \left\| x - x^* \right\|_\cX +
    L_2 \left\|\alpha - \alpha^* \right\|_\cP \; .
\end{displaymath}
To verify this condition, as well as the next one, we will 
give specific Lipschitz bounds on the Ohta-Kawasaki operator. 
We will then show the precise way to combine these bounds in 
order to get the constants $L_k$.
\item[(H4)] For~$\alpha$ close to~$\alpha^*$, the
Fr\'echet derivative~$D_\alpha \cG(\alpha,x^*)$ satisfies
a Lipschitz-type bound. More precisely, there exist positive
real constants~$L_3$ and~$L_4$, such that for all $\alpha \in \cP$
with $\|\alpha - \alpha^*\|_\cP \le \ell_\alpha$ one has
\begin{displaymath}
  \left\| D_\alpha \cG(\alpha,x^*) \right\|_{\cL(\cP,\cY)} \le
    L_3 + L_4 \left\| \alpha - \alpha^* \right\|_\cP \; ,
\end{displaymath}
where~$\ell_\alpha$ is the constant that was chosen in~(H3).
\end{itemize}
Keeping these hypotheses in mind, the constructive implicit
function theorem can then be stated as follows.
\begin{theorem}[Constructive Implicit Function Theorem]
\label{nift:thm}
Let~$\cP$, $\cX$, and~$\cY$ be Banach spaces, suppose that the
nonlinear operator $\cG : \cP \times \cX \to \cY$
is Fr\'echet differentiable, and assume that the
pair~$(\alpha^*,x^*) \in \cP \times \cX$ satisfies
hypotheses~(H1), (H2), (H3), and~(H4).
Finally, suppose that
\begin{equation} \label{nift:thm1}
  4 K^2 \rho L_1 < 1
  \qquad\mbox{ and }\qquad
  2 K \rho < \ell_x \; .
\end{equation}
Then there exist pairs of constants~$(\delta_\alpha,\delta_x)$ with
$0 \le \delta_\alpha \le \ell_\alpha$ and $0 < \delta_x \le \ell_x$,
as well as
\begin{equation} \label{nift:thm2}
  2 K L_1 \delta_x + 2 K L_2 \delta_\alpha \le 1
  \qquad\mbox{ and }\qquad
  2 K \rho + 2 K L_3 \delta_\alpha + 2 K L_4 \delta_\alpha^2
    \le \delta_x  \; ,
\end{equation}
and for each such pair the following holds. For every~$\alpha \in \cP$
with $\|\alpha - \alpha^*\|_\cP \le \delta_\alpha$ there exists a uniquely
determined element~$x(\alpha) \in \cX$ with $\| x(\alpha) - x^* \|_\cX
\le \delta_x$ such that $\cG(\alpha, x(\alpha)) = 0$.
In other words, if we define
\begin{displaymath}
  \cB_\delta^\cX = \left\{ \xi \in \cX \; : \;
    \left\| \xi - x^* \right\|_\cX \le \delta \right\}
  \quad\mbox{ and }\quad
  \cB_\delta^\cP = \left\{ p \in \cP \; : \;
    \left\| p - \alpha^* \right\|_\cP \le \delta \right\}
  \; ,
\end{displaymath}
then all solutions of the nonlinear problem $\cG(\alpha,x)=0$ in the
set $\cB_{\delta_\alpha}^\cP \times \cB_{\delta_x}^\cX$ lie on the graph
of the function $\alpha \mapsto x(\alpha)$.  In addition, the following
two statements are satisfied.
\begin{itemize}
\item For all pairs $(\alpha,x) \in \cB_{\delta_\alpha}^\cP
\times \cB_{\delta_x}^\cX$ the Fr\'echet derivative~$D_x\cG(\alpha,x)
\in \cL(\cX,\cY)$ is a bounded invertible linear operator, whose
inverse is in~$\cL(\cY,\cX)$.
\item If the mapping~$\cG : \cP \times \cX \to \cY$ is $k$-times
continuously Fr\'echet differentiable, then so is the solution
function $\alpha \mapsto x(\alpha)$.
\end{itemize}
\end{theorem}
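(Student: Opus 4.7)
The plan is to recast the equation $\cG(\alpha,x)=0$ as a fixed-point problem for a Newton-like operator and apply the Banach contraction principle. For each $\alpha\in\cP$ with $\|\alpha-\alpha^*\|_\cP\le\delta_\alpha$, I would define
\begin{displaymath}
T_\alpha(x) \;=\; x - D_x\cG(\alpha^*,x^*)^{-1}\,\cG(\alpha,x),
\end{displaymath}
which is well defined by~(H2) and whose fixed points are exactly the zeros of $\cG(\alpha,\cdot)$. The goal is to show that $T_\alpha$ is a $\tfrac12$-contraction on the closed ball $\cB_{\delta_x}^{\cX}$ and maps this ball into itself, so that Banach's theorem yields a unique fixed point $x(\alpha)\in\cB_{\delta_x}^{\cX}$.

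For the contraction estimate I would take $x_1,x_2\in\cB_{\delta_x}^{\cX}$ and use the fundamental theorem of calculus to write
\begin{displaymath}
T_\alpha(x_1)-T_\alpha(x_2) \;=\; D_x\cG(\alpha^*,x^*)^{-1}\!\int_0^1\!\!\Bigl[D_x\cG(\alpha^*,x^*)-D_x\cG\bigl(\alpha,x_2+s(x_1-x_2)\bigr)\Bigr](x_1-x_2)\,ds.
\end{displaymath}
Combining~(H2) with the Lipschitz bound~(H3), whose hypotheses $\|x_j-x^*\|_\cX\le\delta_x\le\ell_x$ and $\|\alpha-\alpha^*\|_\cP\le\delta_\alpha\le\ell_\alpha$ are satisfied, gives
$\|T_\alpha(x_1)-T_\alpha(x_2)\|_\cX \le K(L_1\delta_x+L_2\delta_\alpha)\|x_1-x_2\|_\cX$, which the first inequality of~(\ref{nift:thm2}) bounds by $\tfrac12\|x_1-x_2\|_\cX$. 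For the self-map property I would estimate the reference displacement $T_\alpha(x^*)-x^* = -D_x\cG(\alpha^*,x^*)^{-1}\cG(\alpha,x^*)$ by integrating along the $\alpha$-segment,
\begin{displaymath}
\cG(\alpha,x^*) \;=\; \cG(\alpha^*,x^*) + \int_0^1 D_\alpha\cG\bigl(\alpha^*+t(\alpha-\alpha^*),x^*\bigr)(\alpha-\alpha^*)\,dt,
\end{displaymath}
and applying~(H1) and~(H4) to obtain $\|\cG(\alpha,x^*)\|_\cY\le \rho+L_3\delta_\alpha+\tfrac{L_4}{2}\delta_\alpha^2$. Combining the contraction estimate with this bound via the triangle inequality gives, for every $x\in\cB_{\delta_x}^{\cX}$,
\begin{displaymath}
\|T_\alpha(x)-x^*\|_\cX \;\le\; \tfrac12\|x-x^*\|_\cX + K\bigl(\rho+L_3\delta_\alpha+\tfrac{L_4}{2}\delta_\alpha^2\bigr) \;\le\; \delta_x,
\end{displaymath}
where the final inequality is precisely the second condition of~(\ref{nift:thm2}) (noting $\tfrac{L_4}{2}\le L_4$).

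To exhibit an admissible pair I would set $\delta_\alpha=0$ and $\delta_x=2K\rho$; then~(\ref{nift:thm1}) forces $\delta_x\le\ell_x$ and $2KL_1\delta_x=4K^2\rho L_1<1$, and both strict inequalities are preserved for sufficiently small positive $\delta_\alpha$, producing a nontrivial range of valid pairs. For the first bulleted conclusion, writing $D_x\cG(\alpha,x)=D_x\cG(\alpha^*,x^*)\bigl(I+D_x\cG(\alpha^*,x^*)^{-1}[D_x\cG(\alpha,x)-D_x\cG(\alpha^*,x^*)]\bigr)$ and invoking (H2)--(H3) bounds the perturbation by $K(L_1\delta_x+L_2\delta_\alpha)\le\tfrac12$, so a Neumann series expansion gives invertibility with inverse in $\cL(\cY,\cX)$. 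The second bullet follows from the standard implicit function theorem smoothness result applied locally, since $D_x\cG(\alpha,x(\alpha))$ remains invertible and $\cG$ is $C^k$. The main technical obstacle is purely one of book-keeping: the quantitative closure conditions~(\ref{nift:thm1}) and~(\ref{nift:thm2}) must come out of the estimates cleanly, which requires carefully separating the $x$- and $\alpha$-contributions in~(H3) and integrating~(H4) along the $\alpha$-segment so that the quadratic term $L_4\delta_\alpha^2$ appears with the correct coefficient in the self-map inequality.
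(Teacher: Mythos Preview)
The paper does not actually prove this theorem; it is stated without proof and attributed to the earlier work~\cite{sander:wanner:16a} (see the sentence preceding the theorem: ``This theorem appears in~\cite{sander:wanner:16a}''). So there is no in-paper argument to compare against.

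That said, your proof is correct and is precisely the standard Newton--Kantorovich argument that underlies results of this type: reformulate $\cG(\alpha,x)=0$ as a fixed point of $T_\alpha(x)=x-D_x\cG(\alpha^*,x^*)^{-1}\cG(\alpha,x)$, use~(H2)--(H3) to get the $\tfrac12$-contraction from the first inequality in~(\ref{nift:thm2}), use~(H1) and the $\alpha$-integral of~(H4) for the self-map estimate leading to the second inequality, and then read off invertibility of $D_x\cG(\alpha,x)$ from a Neumann series and smoothness from the classical implicit function theorem. Your observation that the integration of~(H4) actually produces the sharper coefficient $KL_4\delta_\alpha^2$ rather than the stated $2KL_4\delta_\alpha^2$ is also correct; the theorem's condition is simply a slight overestimate that keeps the formulas uniform. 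The existence of an admissible pair via $\delta_\alpha=0$, $\delta_x=2K\rho$ and continuity is exactly how~(\ref{nift:thm1}) enters. Nothing is missing.
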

Throughout the remainder of this paper, we concentrate on finding
computationally accessible versions of hypotheses~(H2), (H3), and~(H4)
for the Ohta-Kawasaki model.  
\subsection{Function spaces}
\label{sec:fun}
Throughout this paper, we let $\Omega = (0,1)^d$ denote the unit cube
in dimension $d = 1,2,3$, and define the constants
\begin{displaymath}
  c_0 = 1
  \quad\mbox{ and }\quad
  c_\ell = \sqrt{2}
  \quad\mbox{ for }\quad
  \ell \in \N \; .
\end{displaymath}
If $k \in \N_0^d$ denotes an arbitrary multi-index of the form
$k = (k_1,\dots,k_d)$, then let
\begin{displaymath}
  c_k = c_{k_1} \cdot \ldots \cdot c_{k_d} \; .
\end{displaymath}
If we then define
\begin{equation}
\label{eqn:phik}
  \phi_k(x) = c_k \prod_{i=1}^d \cos ( k_i \pi x_i )
  \quad\mbox{ for all }\quad
  x = (x_1,\ldots,x_d) \in \Omega \; , 
\end{equation}
then the function collection $\{ \phi_k \}_{k \in \N_0^d}$ forms a complete orthonormal
basis for the space~$L^2(\Omega)$. Any measurable and square-integrable function
$u : \Omega \to \R$ can be written in terms of its Fourier cosine series
\begin{equation} \label{eqn:ucossum}
  u(x) = \sum_{k \in \N_0^d} \alpha_k \phi_k(x) \; ,
\end{equation}
where $\alpha_k \in \R$ are the Fourier coefficients of~$u$. Finally, we
define
\begin{displaymath}
  |k| = (k_1^2 + \dots + k_d^2)^{1/2}
  \quad\mbox{ and }\quad
  |k|_\infty = \max( k_1,  \dots , k_d ) \; .
\end{displaymath}
Each function $\phi_k(x)$ is an eigenfunction of the negative Laplacian. The corresponding eigenvalue 
is given by $\kappa_k$, defined via the equation 
\begin{displaymath}
  -\Delta \phi_k(x) = \kappa_k \phi_k(x)
  \qquad\mbox{ with }\qquad
  \kappa_k = \pi^2 \left(k_1^2 + k_2^2 + \dots + k_d^2\right) =
    \pi^2 |k|^2 \; .
\end{displaymath}
A straightforward direct computation shows that each~$\phi_k(x)$ satisfies
the homogeneous Neumann boundary condition $\partial \phi_k /\partial \nu = 0$. 
In addition, as a result of being an eigenfunction of~$-\Delta$, each function $\phi_k(x)$ 
also satisfies the second boundary condition in~(\ref{eqn:dbcp}), since the
identity $\partial (\Delta \phi_k) /\partial \nu = -\kappa_k  \partial
\phi_k /\partial \nu = 0$ holds. Therefore any finite Fourier series as
above automatically satisfies both boundary conditions of the diblock copolymer
equation.

Based on our construction, the family $\{ \phi_k \}_{k \in \N_0^d}$ is a
complete orthonormal basis for the space~$L^2(\Omega)$. Thus, if~$u$ is
given as in~(\ref{eqn:ucossum}) one can easily see that
\begin{displaymath}
  \| u \|_{L^2} = \left( \sum_{k \in \N_0^d} \alpha_k^2 \right)^{1/2} \; .
\end{displaymath}
For our application to the diblock copolymer model, we need to work
with suitable subspaces of the Sobolev spaces~$H^k(\Omega) = W^{k,2}(\Omega)$,
see for example~\cite{adams:fournier:03a}. These subspaces have to
reflect the required homogeneous Neumann boundary conditions and they can be
introduced as follows.
For $\ell \in \N$ consider the space 
\begin{displaymath}
  \cH^\ell =
    \left\{ u = \sum_{k \in \N_0^d} \alpha_k \phi_k : \| u \|_{\cH^\ell}
    < \infty \right\} \;,
\end{displaymath}
where
\begin{displaymath}
  \|u\|_{\cH^\ell} = \left( \sum_{k \in \N_0^d} \left( 1 +
    \kappa_k^\ell \right) \alpha_k^2 \right)^{1/2} \; . 
\end{displaymath}
One can easily verify that this is equivalent to the definition
\begin{displaymath}
  \|u\|^2_{\cH^\ell} =
  \|u \|_{L^2}^2 + \left\| (-\Delta)^{\ell/2} u \right\| ^2_{L^2} \;,
\end{displaymath}
where~$\| \cdot \|_{L^2}$ denotes the standard $L^2(\Omega)$-norm
on the domain~$\Omega$ as mentioned above, and the fractional Laplacian 
for odd~$\ell$ is defined using the spectral definition. 
We note that we have incorporated the boundary conditions 
of~(\ref{eqn:dbcp}) into our definition of the spaces
$\cH^\ell$. For example, 
\begin{eqnarray*}
\cH^1 &=& H^1(\Omega), \\
\cH^2 &=& \left\{ u \in H^2(\Omega): \frac{\partial u}{\partial \nu} =0  \right\} \, \quad\mbox{, and } \\ 
\cH^4 &=& \left\{ u \in H^4(\Omega): \frac{\partial u}{\partial \nu} = \frac{\partial \Delta u}{\partial \nu} =0  \right\} \, ,
\end{eqnarray*}
where the boundary conditions in the second and third equations are considered in the sense of the trace operator.
The first identity follows as a special case from the results
in~\cite{huseynov:shykhammedov:12a, muradov:salmanov:14a}, the second 
identity has been established in~\cite[Lemma~3.2]{maier:wanner:00a}, and also the third
identity can be verified as in~\cite[Lemma~3.2]{maier:wanner:00a}.
For the sake of simplicity
we further define $\cH^0 = L^2(\Omega)$.

While the spaces~$\cH^\ell$ incorporate the boundary conditions
of~(\ref{eqn:dbcp}), recall that we have reformulated the diblock
copolymer equation in such a way that solutions satisfy the integral
constraint $\int_\Omega u \; dx = 0$, since the case of nonzero average
has been absorbed into the placement of the parameter~$\mu$. In order
to treat this additional constraint, we therefore need to restrict the
spaces~$\cH^\ell$ further. Consider now an {\em arbitrary integer\/}~$\ell
\in \Z$ and define the space
\begin{equation} \label{defhlbar}
  \overline{\cH}^\ell =
  \left\{ u = \sum_{k \in \N_0^d, \; |k|>0} \alpha_k \phi_k
    \; : \; \| u \|_{\overline{\cH}^\ell} < \infty \right\} \;, 
\end{equation}
where we use the modified norm
\begin{equation} \label{defhlbarnorm}
  \| u \|_{\overline{\cH}^\ell} =
  \left( \sum_{k \in \N_0^d, \; |k|>0} \kappa_k^\ell \alpha_k^2
    \right)^{1/2} \;. 
\end{equation}
Notice that for $\ell = 0$ this definition reduces to the subspace
of~$L^2(\Omega)$ of all functions with average zero equipped with its
standard norm, since we removed the constant basis function from the
Fourier series. For $\ell > 0$ one can easily see that
$\overline{\cH}^\ell \subset \cH^\ell$, and  that the new norm
is equivalent to our norm on~$\cH^\ell$. We still need to shed
some light on the new definition~(\ref{defhlbar}) for
negative integers $\ell < 0$. In this case, the series
in~(\ref{eqn:ucossum}) is interpreted formally, i.e., the element
$u \in \overline{\cH}^\ell$ for $\ell < 0$ is identified with the
sequence of its Fourier coefficients. Moreover, one can easily see
that in this case~$u$ acts as a bounded linear functional
on~$\overline{\cH}^{-\ell}$. In fact, for all~$\ell < 0$ the 
space~$\overline{\cH}^\ell$ can be considered as a subspace of
the negative exponent Sobolev space~$H^\ell(\Omega) =
W^{\ell,2}(\Omega)$, see again~\cite{adams:fournier:03a}.
Finally, for every $\ell \in \Z$ the space~$\overline{\cH}^\ell$ is
a Hilbert space with inner product
\begin{displaymath}
  (u,v)_{\overline{\cH}^\ell} =
  \sum_{k \in \N_0^d, \; |k|>0} \kappa_k^\ell \alpha_k \beta_k \;, 
\end{displaymath}
where
\begin{displaymath}
  u = \sum_{k \in \N_0^d, \; |k|>0} \alpha_k \phi_k
    \in \overline{\cH}^\ell
  \qquad\mbox{ and }\qquad
  v = \sum_{k \in \N_0^d, \; |k|>0} \beta_k \phi_k
    \in \overline{\cH}^\ell \;. 
\end{displaymath}
The above spaces form the functional analytic backbone of this
paper, and they allow us to reformulate the equilibrium problem 
for~(\ref{eqn:dbcp}) as a zero finding problem. Note first, however,
that the functions~$\phi_k$ can also be used to obtain an orthonormal
basis in~$\overline{\cH}^\ell$. In fact, we only have to drop the 
constant function~$\phi_0$ and apply the following rescaling.
\begin{lemma} \label{lemma:orthoghell}
The set $\left\{ \kappa_k^{-\ell/2} \phi_k(x) \right\}_{k \in \N_0^d,
\; |k|>0}$
forms a complete orthonormal set for the Hilbert space~$\overline{\cH}^\ell$.
\end{lemma}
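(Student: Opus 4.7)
The plan is to verify both orthonormality and completeness by a direct computation using the inner product and norm already defined on $\overline{\cH}^\ell$. Throughout, I write $\psi_k = \kappa_k^{-\ell/2}\phi_k$ for $k \in \N_0^d$ with $|k|>0$. The key observation is that $\psi_k$, viewed as an element of $\overline{\cH}^\ell$ through its Fourier cosine expansion, has Fourier coefficients $\alpha_j^{(k)} = \kappa_k^{-\ell/2}\delta_{jk}$.

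First I would establish orthonormality. Plugging the coefficients of $\psi_k$ and $\psi_m$ directly into the inner product formula
\begin{displaymath}
  (u,v)_{\overline{\cH}^\ell} = \sum_{j\in\N_0^d,\;|j|>0} \kappa_j^\ell \alpha_j \beta_j
\end{displaymath}
gives $(\psi_k,\psi_m)_{\overline{\cH}^\ell} = \kappa_k^\ell \kappa_k^{-\ell/2} \kappa_m^{-\ell/2}\delta_{km} = \delta_{km}$, which is precisely the orthonormality claim.

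Next I would handle completeness. Given $u = \sum_{|k|>0}\alpha_k\phi_k \in \overline{\cH}^\ell$, I would compute $(u,\psi_k)_{\overline{\cH}^\ell} = \kappa_k^\ell \cdot \alpha_k \cdot \kappa_k^{-\ell/2} = \kappa_k^{\ell/2}\alpha_k$. If this inner product vanishes for every admissible $k$, then $\alpha_k = 0$ for all $k$ with $|k|>0$, and the definition of $\overline{\cH}^\ell$ forces $u=0$. Equivalently, one may set $\beta_k = \kappa_k^{\ell/2}\alpha_k$ and note the rewriting
\begin{displaymath}
  u = \sum_{|k|>0} \alpha_k \phi_k = \sum_{|k|>0} \beta_k \psi_k,
  \qquad \|u\|_{\overline{\cH}^\ell}^2 = \sum_{|k|>0}\kappa_k^\ell\alpha_k^2 = \sum_{|k|>0}\beta_k^2,
\end{displaymath}
which exhibits $u$ as the norm-convergent expansion in the $\psi_k$ with the Bessel/Parseval identity holding; standard Hilbert space theory then promotes this to completeness.

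Honestly, this lemma is essentially a bookkeeping statement, so I do not anticipate a real obstacle. The only point requiring a little care is treating the case of negative integer $\ell$, where elements of $\overline{\cH}^\ell$ are only formal series identified with their coefficient sequences; but the inner product and norm are defined purely in terms of those coefficients, so the same computation goes through verbatim. For this reason I expect the full proof to occupy only a few lines.
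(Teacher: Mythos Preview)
Your proposal is correct and is exactly the direct verification the paper has in mind; indeed, the paper states this lemma without proof, treating it as an immediate consequence of the definitions, and your computation with the coefficients $\alpha_j^{(k)} = \kappa_k^{-\ell/2}\delta_{jk}$ is the natural way to fill in the details.
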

We close this section by briefly showing how the diblock copolymer
equilibrium problem can be stated as a zero set problem in our functional
analytic setting. For this, consider the operator
\begin{displaymath}
  F : \R^3 \times X \to Y \; ,
  \qquad\mbox{ with }\qquad
  X = \overline{\cH}^2
  \quad\mbox{ and }\quad
  Y = \overline{\cH}^{-2} \; , 
\end{displaymath}
which is defined as
\begin{equation} \label{eqn:deffoperator}
  F(\lambda,\sigma,\mu, u) =
  -\Delta \left( \Delta u + \lambda f(u + \mu) \right)
    - \lambda \sigma u \; . 
\end{equation}
The problem is now formulated weakly, and in particular, the second boundary 
condition $\partial (\Delta u) / \partial \nu = 0$ is no longer explicitly
stated in this weak formulation. Note, however, that the first boundary 
condition $ \partial u / \partial \nu = 0$ has been incorporated into the
space~$X = \overline{\cH}^2$.
The fact that $f$ is $C^2$ is sufficient to guarantee that the function~$F$
maps $X$ to $Y$, since we only consider domains up to dimension three. 
Then for fixed parameters, an equilibrium solution~$u$ to the diblock
copolymer equation~(\ref{eqn:dbcp}) is a function which satisfies the
identity $F(\lambda,\sigma,\mu, u) = 0$. Moreover, the Fr\'echet derivative
of the operator~$F$ with respect to~$u$ at this equilibrium is given by
\begin{equation} \label{eqn:deffrechetderivative}
  D_u F(\lambda, \sigma, \mu, u) [v] =
  - \Delta \left( \Delta v + \lambda f'(u + \mu)v \right)
    - \lambda \sigma v \; . 
\end{equation}
In our formulation, the boundary and integral conditions which are
part of~(\ref{eqn:dbcp}) have been incorporated into the choice of
the domain $X = \overline{\cH}^2$ of the nonlinear operator~$F$. 
\subsection{Constructive Sobolev embedding and Banach algebra constants}
\label{sec:sob}
For classical Sobolev embedding theorems, it is sufficient to write
statements such as ``the Sobolev space~$\cH^2$ can be continuously embedded
into~$L^\infty(\Omega)$,'' without worrying about the specific constants needed
to do so. However, for the purpose of computer-assisted proofs, such
statements are insufficient. Instead we need  specific numerical bounds
to compare the norms of a function or product of functions when
considered in different spaces. Parallel to  the  name constructive
implicit function theorem, we refer to the bounds on the constants as
{\em constructive Sobolev embedding constants\/}. In addition, we will
need a {\em constructive Banach algebra estimate\/} on the relationship between
$\| u v \|_{\cH^2}$ and the product $\|u\|_{\cH^2} \|v\|_{\cH^2}$. In
particular, we require the exact values of $C_m$, $\overline{C}_m$, and
$C_b$ in one, two, and three dimensions given in the following
equations:
\begin{eqnarray}
  \|u \|_\infty &\le& C_m \; \| u \|_{\cH^2} \;,
    \qquad\qquad\mbox{ for all } u \in \cH^2 \;,
    \nonumber \\[1ex]
  \| u \|_\infty &\le& \overline{C}_m \; \| u \|_{\overline{\cH}^2} \;,
    \qquad\qquad\mbox{ for all } u \in \overline{\cH}^2
    \label{eqn:cmcb} \;,\\[1.5ex]
  \| u v \|_{\cH^2} &\le& C_b \;  \| u \|_{\cH^2} \| v \|_{\cH^2}
    \;, \quad\;\;\;\mbox{ for all } u,v \in \cH^2 \nonumber \;. 
\end{eqnarray}
The values of~$C_m$ and~$C_b$ in dimensions~$1$, $2$, and~$3$ were
established in~\cite{wanner:18b} using rigorous computational techniques.
The values of~$\overline{C}_m$ can be obtained by adapting the approach
in this paper, as outlined in the next lemma. Table~\ref{table1} 
summarizes the values of all necessary constants.
\begin{table}
\begin{center}
    \begin{tabular}{|c||c|c|c|}
    \hline
    Dimension $d$ & $1$ & $2$ & $3$ \\ \hline\hline
    Sobolev Embedding Constant $C_m$ & $1.010947$ &
      $1.030255$ & $1.081202$  \\ \hline
    Sobolev Embedding Constant $\overline{C}_m$ &
      $0.149072$ & $0.248740$ & $0.411972$ \\ \hline
    Banach Algebra Constant $C_b$ &
      $1.471443$ & $1.488231$ & $1.554916$ \\ \hline  
    \end{tabular}
    \vspace*{0.3cm}
    \caption{\label{table1} 
    These values are rigorous upper bounds for the embedding
             constants in~(\ref{eqn:cmcb}).}
\end{center}
\end{table}
\begin{lemma}[Sobolev embedding for the zero mass case]
\label{lemma:sobzeromass}
For all functions $u \in \overline{\cH}^2$ we have the estimate
\begin{equation} \label{lemma:sobzeromass1}
  \| u \|_\infty \quad\le\quad
  \| u \|_{\overline{\cH}^2} \cdot \left( \sum_{k \in \N_0^d, \; |k|>0}
    c_k^2 \kappa_k^{-2} \right)^{1/2} \quad\le\quad
  \overline{C}_m \| u \|_{\overline{\cH}^2} \; ,
\end{equation}
where the value of the constant~$\overline{C}_m$ is given
in Table~\ref{table1}.
\end{lemma}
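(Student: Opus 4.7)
The plan is to bound $|u(x)|$ pointwise using the Fourier expansion and then apply Cauchy--Schwarz with a weight that matches the $\overline{\cH}^2$-norm. Given $u = \sum_{|k|>0} \alpha_k \phi_k \in \overline{\cH}^2$, observe from~\eqref{eqn:phik} that $|\phi_k(x)| \le c_k$ for every $x \in \Omega$, since each cosine factor has absolute value at most one. Writing $|\alpha_k|\, c_k$ as the product $(\kappa_k |\alpha_k|) \cdot (c_k \kappa_k^{-1})$ and applying Cauchy--Schwarz in $\ell^2$ yields
\begin{displaymath}
  |u(x)| \;\le\; \sum_{|k|>0} |\alpha_k|\, c_k
    \;\le\; \left( \sum_{|k|>0} \kappa_k^2\, \alpha_k^2 \right)^{1/2}
    \left( \sum_{|k|>0} c_k^2\, \kappa_k^{-2} \right)^{1/2},
\end{displaymath}
and the first factor on the right is precisely $\|u\|_{\overline{\cH}^2}$. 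Taking the supremum over $x \in \Omega$ gives the first inequality in~\eqref{lemma:sobzeromass1}; en route the same estimate shows that the Fourier series converges uniformly, which legitimizes the pointwise manipulation.

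The second inequality reduces to the concrete numerical estimate
\begin{displaymath}
  \sum_{k \in \N_0^d,\; |k|>0} c_k^2\, \kappa_k^{-2}
  \;=\; \pi^{-4} \sum_{k \in \N_0^d,\; |k|>0} \frac{c_k^2}{|k|^4}
  \;\le\; \overline{C}_m^2,
\end{displaymath}
with $\overline{C}_m$ as in Table~\ref{table1}. For $d = 1$ the sum collapses to $2\pi^{-4} \zeta(4) = 1/45$, whose square root agrees with the tabulated value $0.149072$. For $d = 2$ and $d = 3$ no elementary closed form is available, so the strategy is to split the index set as $\{0 < |k|_\infty \le N\} \cup \{|k|_\infty > N\}$, evaluate the head with directed-rounding interval arithmetic, and control the tail analytically.

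The main obstacle is the rigorous tail bound. Using the uniform estimate $c_k^2 \le 2^d$ together with the monotonicity of $r \mapsto r^{-4}$ away from the origin, I would dominate the lattice sum $\sum_{|k|_\infty > N} |k|^{-4}$ by the integral $\int_{|x|_\infty > N} |x|^{-4}\, dx$, which is finite for $d \le 3$ and readily computed in spherical (or iterated Cartesian) coordinates. Choosing $N$ large enough so that this tail contribution is negligible at the required precision and combining with the rigorously bracketed head sum produces the numerical values of $\overline{C}_m$ recorded in Table~\ref{table1}. This parallels the treatment of $C_m$ and $C_b$ developed in~\cite{wanner:18b}, as indicated in the paragraph preceding the lemma.
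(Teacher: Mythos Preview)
Your argument is correct and matches the paper's proof essentially line by line: the first inequality comes from $|\phi_k(x)|\le c_k$ followed by Cauchy--Schwarz with the weight $\kappa_k$, and the second from a head--tail split of $\sum c_k^2\kappa_k^{-2}$ with the head evaluated in interval arithmetic and the tail bounded analytically (the paper invokes the explicit tail estimate from~\cite[Corollary~3.3]{wanner:18b} with cutoff $N=1000$, which is precisely an integral-comparison bound of the type you describe). The only cosmetic differences are that the paper splits on $|k|$ rather than $|k|_\infty$ and does not single out the closed form $2\pi^{-4}\zeta(4)=1/45$ in the one-dimensional case.
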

\begin{proof}
Suppose that $u \in \overline{\cH}^2$ is given by
$u = \sum_{k \in \N_0^d, \; |k|>0} \alpha_k \phi_k$.
According to the definition of the functions~$\phi_k$
we have $\| \phi_k \|_\infty = c_k$, which immediately implies
for all $x \in \Omega$ the estimate
\begin{eqnarray*}
  |u(x)| & \le & \sum_{k \in \N_0^d, \; |k|>0} \left| \alpha_k \right|
    \, \left| \phi_k(x) \right|
    \; \le \;
    \sum_{k \in \N_0^d, \; |k|>0} \left| \alpha_k \right| c_k
    \; = \; 
    \sum_{k \in \N_0^d, \; |k|>0} \left| \alpha_k \right| 
    \kappa_k \cdot \frac{c_k}{\kappa_k} \\[2ex]
  & \le & \left( \sum_{k \in \N_0^d, \; |k|>0} \alpha_k^2 
    \kappa_k^2 \right)^{1/2} \cdot
    \left( \sum_{k \in \N_0^d, \; |k|>0} c_k^2 \kappa_k^{-2} \right)^{1/2} \; ,
\end{eqnarray*}
and together with~(\ref{defhlbarnorm}) this immediately establishes the
first estimate in~(\ref{lemma:sobzeromass1}).

In order to complete the proof one only has to find a rigorous upper
bound on the second factor in the last line of the above estimate.
For this, one can first use the proof of~\cite[Corollary~3.3]{wanner:18b}
to establish the tail bound
\begin{displaymath}
  \sum_{k \in \N_0^d, \; |k| \ge N} c_k^2 \kappa_k^{-2}
  \; \le \;
  \frac{2^d}{\pi^4} \cdot \gamma_d(N) \; ,
\end{displaymath}
where~$\gamma_d(N)$ is explicitly defined in~\cite[Equation~(16)]{wanner:18b}.
This in turn yields the estimate
\begin{displaymath}
  \sum_{k \in \N_0^d, \; |k|>0} c_k^2 \kappa_k^{-2}
  \quad\le\quad
  \sum_{k \in \N_0^d, \; 0<|k| <  N} c_k^2 \kappa_k^{-2} \; + \;
    \frac{2^d}{\pi^4} \cdot \gamma_d(N) \; .
\end{displaymath}
Evaluating the finite sum and the tail bound using interval arithmetic
and $N = 1000$ then furnishes the constant in Table~\ref{table1}.
\end{proof}

The next lemma derives explicit bounds for the norm equivalence
of the norms on the Hilbert spaces~$\overline{\cH}^2$ and on~$\cH^2$,
which contain functions of zero and nonzero average, respectively.
\begin{lemma}[Norm equivalence between zero and nonzero mass]
\label{lemma:sobnobar2bar}
For all $u \in \overline{\cH}^2$ we have
\begin{displaymath}
  \| u \|_{\overline{\cH}^2} \le
  \| u \|_{\cH^2} \le
  \frac{\sqrt{1 + \pi^4}}{\pi^2} \| u \|_{\overline{\cH}^2} \;. 
\end{displaymath}
\end{lemma}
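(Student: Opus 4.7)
The plan is to prove both inequalities by a direct termwise comparison of the Fourier series representations of the two norms. Since every $u \in \overline{\cH}^2$ has the form $u = \sum_{|k|>0} \alpha_k \phi_k$, i.e., the coefficient $\alpha_0$ vanishes, both norms restrict to sums over the index set $\{ k \in \N_0^d : |k|>0\}$. Specifically, I would write
\begin{displaymath}
  \| u \|_{\overline{\cH}^2}^2 = \sum_{|k|>0} \kappa_k^2 \alpha_k^2
  \qquad\mbox{and}\qquad
  \| u \|_{\cH^2}^2 = \sum_{|k|>0} (1 + \kappa_k^2)\alpha_k^2,
\end{displaymath}
so that the problem reduces to comparing the weights $\kappa_k^2$ and $1 + \kappa_k^2$.

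The lower bound $\| u \|_{\overline{\cH}^2} \le \| u \|_{\cH^2}$ is then immediate from the elementary inequality $\kappa_k^2 \le 1 + \kappa_k^2$. For the upper bound, the key observation is that the smallest eigenvalue $\kappa_k$ with $|k|>0$ is attained at a multi-index of the form $(1,0,\dots,0)$ (or any permutation), giving $\kappa_k = \pi^2$, and hence $\kappa_k^2 \ge \pi^4$ for every $k$ with $|k|>0$. Consequently, for those $k$ one has
\begin{displaymath}
  1 + \kappa_k^2 \;\le\; \frac{\kappa_k^2}{\pi^4} + \kappa_k^2
  \;=\; \frac{1 + \pi^4}{\pi^4}\, \kappa_k^2,
\end{displaymath}
and summing against $\alpha_k^2$ yields $\| u \|_{\cH^2}^2 \le \frac{1+\pi^4}{\pi^4}\| u \|_{\overline{\cH}^2}^2$. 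Taking square roots produces the stated constant $\sqrt{1+\pi^4}/\pi^2$.

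There is no real obstacle here; the result is essentially bookkeeping about the spectrum of $-\Delta$ on $\Omega$ under the given boundary conditions. The only point to be careful about is that the zero-average condition is genuinely used, since without it the term $\alpha_0^2$ in $\|u\|_{\cH^2}^2$ would spoil the upper estimate (there is no corresponding $\kappa_0^2 = 0$ contribution to absorb it on the right-hand side). Everything else is a termwise weight comparison that follows at once from the spectral gap $\kappa_k \ge \pi^2$ for $|k|>0$.
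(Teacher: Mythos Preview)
Your proof is correct and follows essentially the same approach as the paper: both arguments reduce to the termwise weight comparison $\kappa_k^2 \le 1 + \kappa_k^2 \le \frac{1+\pi^4}{\pi^4}\,\kappa_k^2$ for $|k|>0$, using the spectral gap $\kappa_k \ge \pi^2$.
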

\begin{proof}
The first inequality is clear from the definitions of the two
norms in the last section, since $\kappa_k^2 \le 1 + \kappa_k^2$. 
For the second inequality, note that for $|k|>0$ one has the
inequality $\kappa_k = \pi^2 |k|^2 \ge \pi^2$, and therefore
\begin{displaymath}
  1+ \kappa_k^2 =
  \kappa_k^2 \left( 1  + \frac{1}{\kappa_k^2} \right) \le
  \kappa_k^2 \left(1 + \frac{1}{\pi^4} \right) =
  \kappa_k^2 \, \frac{1 + \pi^4}{\pi^4} \;. 
\end{displaymath}
This in turn implies
\begin{displaymath}
  \| u \|_{\cH^2}^2 =
  \sum_{k \in \N_0^d, \; |k|>0} (1 + \kappa_k^2) \alpha_k^2 \le
  \frac{1 + \pi^4}{\pi^4} \sum_{k \in \N_0^d, \; |k|>0}
    \kappa_k^2 \alpha_k^2 =
  \frac{1 + \pi^4}{\pi^4} \| u \|_{\overline{\cH}^2}^2 \; ,
\end{displaymath}
which completes the proof of the lemma.
\end{proof}
Note that from the above lemma one could conclude
$\overline{C}_m \le (\sqrt{1 + \pi^4}/\pi^2) C_m$, but the results
given in Lemma~\ref{lemma:sobzeromass} are around an order of
magnitude better. 

Our specific norm choice on the spaces~$\overline{\cH}^\ell$
has some convenient implications for its relation to the
Laplacian operator~$\Delta$. Clearly for any function
$u \in \overline{\cH}^\ell$ we have both $\Delta u \in
\overline{\cH}^{\ell-2}$ and $\Delta^{-1} u \in
\overline{\cH}^{\ell+2}$. Furthermore, if~$u$
is of the form
\begin{displaymath}
  u = \sum_{k \in \N_0^d, \; |k|>0} \alpha_k \phi_k \;,
  \qquad\mbox{ then }\qquad
  -\Delta u = \sum_{k \in \N_0^d, \; |k|>0}
    \kappa_k \alpha_k \phi_k \;,
\end{displaymath}
and we obtain the representation for $-\Delta^{-1}u$ if we
replace~$\kappa_k$ in the last sum by~$\kappa_k^{-1}$. This
immediately yields
\begin{eqnarray*}
  \| \Delta u \|_{\overline{\cH}^{\ell-2}}^2 &=&
    \sum_{k \in \N_0^d, \; |k|>0} \kappa_k^{\ell-2}
    \kappa_k^2 \alpha_k^2 \;, \\[1.5ex]
  \| u \|_{\overline{\cH}^\ell}^2 &=&
    \sum_{k \in \N_0^d, \; |k|>0} \kappa_k^\ell
    \alpha_k^2 \;, \\[1.5ex]
  \| \Delta^{-1} u \|_{\overline{\cH}^{\ell+2}}^2 &=&
    \sum_{k \in \N_0^d, \; |k|>0} \kappa_k^{\ell+2}
      \kappa_k^{-2} \alpha_k^2 \; ,
\end{eqnarray*}
and altogether we have verified the following lemma.
\begin{lemma}[The Laplacian is an isometry] \label{lemma:lapiso}
For every $\ell \in \Z$ the Laplacian operator~$\Delta$ is an isometry
from~$\overline{\cH}^\ell$ to~$\overline{\cH}^{\ell-2}$, i.e., we have
\begin{eqnarray*}
  \| \Delta^{-1} u \|_{\overline{\cH}^{\ell+2}} =
  \| u \|_{\overline{\cH}^\ell} =
  \| \Delta u \|_{\overline{\cH}^{\ell-2}} \; .
\end{eqnarray*}
\end{lemma}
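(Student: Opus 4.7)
The plan is straightforward: expand any $u \in \overline{\cH}^\ell$ in the Fourier basis as $u = \sum_{k \in \N_0^d,\, |k|>0} \alpha_k \phi_k$ and exploit the fact that the $\phi_k$ are eigenfunctions of $-\Delta$ with eigenvalue $\kappa_k$. The whole content of the lemma is already encoded in the norm definition~(\ref{defhlbarnorm}), so the proof is really just a bookkeeping calculation on coefficient sequences.

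First, I would apply $-\Delta$ termwise to obtain $-\Delta u = \sum \kappa_k \alpha_k \phi_k$, which is a formal Fourier series with coefficients $\kappa_k \alpha_k$ in place of $\alpha_k$. Plugging this directly into~(\ref{defhlbarnorm}) yields
\begin{displaymath}
  \| \Delta u \|_{\overline{\cH}^{\ell-2}}^2
  = \sum_{|k|>0} \kappa_k^{\ell-2} (\kappa_k \alpha_k)^2
  = \sum_{|k|>0} \kappa_k^{\ell} \alpha_k^2
  = \| u \|_{\overline{\cH}^\ell}^2 \;,
\end{displaymath}
which both proves the second equality of the lemma and shows $\Delta u \in \overline{\cH}^{\ell-2}$.

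For the first equality I would argue symmetrically. Because the zero mode $k = 0$ is excluded from $\overline{\cH}^\ell$, one has $\kappa_k \ge \pi^2 > 0$ for every index appearing in the sum, so it is legitimate to define $-\Delta^{-1} u = \sum \kappa_k^{-1} \alpha_k \phi_k$ termwise. An identical calculation then gives
\begin{displaymath}
  \| \Delta^{-1} u \|_{\overline{\cH}^{\ell+2}}^2
  = \sum_{|k|>0} \kappa_k^{\ell+2} (\kappa_k^{-1} \alpha_k)^2
  = \sum_{|k|>0} \kappa_k^{\ell} \alpha_k^2
  = \| u \|_{\overline{\cH}^\ell}^2 \;.
\end{displaymath}
Concatenating the two identities yields the full chain asserted in the statement.

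There is no genuine obstacle in this proof; the only point that might deserve a sentence is the interpretation when $\ell$ is negative. In that regime the series defining $u$ need not converge in $L^2$, but, as noted in the discussion preceding~(\ref{defhlbar}), an element of $\overline{\cH}^\ell$ is then identified with its coefficient sequence $(\alpha_k)$, and the operations $\Delta$ and $\Delta^{-1}$ act on coefficient sequences by multiplication by $-\kappa_k$ and $-\kappa_k^{-1}$ respectively. The two displayed identities above are then purely algebraic manipulations of nonnegative series, valid for every $\ell \in \Z$ without any additional convergence consideration beyond the definition of the norm itself.
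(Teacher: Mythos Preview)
Your proof is correct and follows exactly the same route as the paper: expand $u$ in the eigenbasis $\{\phi_k\}$, apply $-\Delta$ (respectively $-\Delta^{-1}$) as multiplication by $\kappa_k$ (respectively $\kappa_k^{-1}$) on coefficients, and read off the norm identities directly from~(\ref{defhlbarnorm}). The paper in fact places this computation immediately before the lemma statement rather than after it, and your added remark on the interpretation for negative~$\ell$ matches the paper's earlier discussion of~(\ref{defhlbar}).
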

To close this section we present a final result which relates the
standard norm in the Hilbert space~$\overline{\cH}^\ell$ to the norm
in~$\overline{\cH}^m$ if~$\ell \le m$. This inequality will turn out
to be useful later on.
\begin{lemma}[Relating the norms in $\overline{\cH}^\ell$ and $\overline{\cH}^m$]
\label{lemma:bscaleest}
For all $u \in \overline{\cH}^m$ and all $\ell \le m$ we have the estimate
\begin{displaymath}
  \| u \|_{\overline{\cH}^\ell} \; \le \;
  \frac{1}{\pi^{m-\ell}} \, \| u \|_{\overline{\cH}^m}\;. 
\end{displaymath}
Furthermore, note that in the special case $\ell = 0 \le m$ we have
$\| u \|_{\overline{\cH}^0} = \| u \|_{L^2}$.
\end{lemma}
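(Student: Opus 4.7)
The plan is to work coefficient-wise using the Fourier expansion that defines the norms on the spaces $\overline{\cH}^\ell$. Writing $u = \sum_{k \in \N_0^d,\, |k|>0} \alpha_k \phi_k$, the goal is to compare $\sum \kappa_k^\ell \alpha_k^2$ with $\sum \kappa_k^m \alpha_k^2$ term by term and then sum the resulting bounds.

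The first step is to exploit the spectral gap. Since we only sum over $|k| > 0$, every eigenvalue satisfies $\kappa_k = \pi^2 |k|^2 \ge \pi^2$. Because $\ell - m \le 0$, raising this inequality to the nonpositive power $\ell - m$ reverses it and yields
\begin{displaymath}
\kappa_k^{\ell - m} \le \pi^{2(\ell - m)} = \frac{1}{\pi^{2(m-\ell)}}.
\end{displaymath}
Multiplying through by $\kappa_k^m \alpha_k^2 \ge 0$ gives the pointwise bound
\begin{displaymath}
\kappa_k^\ell \alpha_k^2 = \kappa_k^{\ell - m} \cdot \kappa_k^m \alpha_k^2 \le \frac{1}{\pi^{2(m-\ell)}} \, \kappa_k^m \alpha_k^2.
\end{displaymath}
Summing this inequality over all $k \in \N_0^d$ with $|k| > 0$ and appealing to the definition~(\ref{defhlbarnorm}) of the two norms, I obtain $\| u \|_{\overline{\cH}^\ell}^2 \le \pi^{-2(m-\ell)} \| u \|_{\overline{\cH}^m}^2$, from which the stated estimate follows by taking square roots.

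For the final remark, the case $\ell = 0$ gives $\| u \|_{\overline{\cH}^0}^2 = \sum_{|k| > 0} \alpha_k^2$, and since the family $\{\phi_k\}_{k \in \N_0^d}$ is a complete orthonormal basis of $L^2(\Omega)$ and $u$ has vanishing zeroth coefficient (i.e.\ zero average), Parseval's identity shows that this sum equals $\| u \|_{L^2}^2$.

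There is no real obstacle here: once one notices that the constraint $|k| > 0$ forces $\kappa_k \ge \pi^2$, the entire inequality reduces to a termwise comparison. The only point that requires a moment of care is making sure the direction of the inequality is correct when raising $\kappa_k \ge \pi^2$ to the nonpositive exponent $\ell - m$, so that the constant genuinely takes the form $\pi^{-(m-\ell)}$ after taking square roots.
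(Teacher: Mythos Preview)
Your proof is correct and follows essentially the same route as the paper: both arguments use the spectral gap $\kappa_k \ge \pi^2$ for $|k|>0$ to bound $\kappa_k^{\ell-m}$ by $\pi^{2(\ell-m)}$ and then sum the resulting termwise inequality. Your version is slightly more explicit in justifying the direction of the inequality and in spelling out the Parseval identity for the $\ell=0$ remark, but the underlying argument is identical.
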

\begin{proof}
Suppose that $u \in \overline{\cH}^m$ is given by
$u = \sum_{k \in \N_0^d, \; |k|>0} \alpha_k \phi_k$.
Then we have
\begin{displaymath}
  \| u \|_{\overline{\cH}^\ell}^2 \; = \; \sum_{k \in \N_0^d, \; |k|>0}
    \frac{\kappa_k^m \alpha_k^2}{\kappa_k^{m-\ell}} \; \le \;
  \frac{1}{\pi^{2(m-\ell)}} \sum_{k \in \N_0^d, \; |k|>0}
    \kappa_k^m \alpha_k^2 \; = \;
  \frac{1}{\pi^{2(m-\ell)}} \| u \|_{\overline{\cH}^m}^2 \; ,
\end{displaymath}
since for all $|k| > 0$ one has $\kappa_k \ge \pi^2$.
\end{proof}
\subsection{Projection operators}
\label{sec:proj}
In order to establish computer-assisted existence proofs for
equilibrium solutions of~(\ref{eqn:dbcp}) one needs to work
with suitable finite-dimensional approximations. In our framework,
we use truncated cosine series, and this is formalized in the current
section through the introduction of suitable projection operators.

For this, let~$N \in \N$ denote a positive integer, and
consider $u \in \cH^\ell$ for $\ell \in \N_0$, or alternatively
$u \in \overline{\cH}^\ell$ for $\ell \in \Z$, of the form
$u = \sum_{k \in \N_0^d} \alpha_k \phi_k$, where in the latter
case $\alpha_0 = 0$. Then we define the projection
\begin{equation} \label{eqn:defpn}
  P_N u = \sum_{k \in \N_0^d, \; |k|_\infty< N} \alpha_k \phi_k \; . 
\end{equation}
Note that in this definition we use the $\infty$-norm of the
multi-index~$k$, since this simplifies the implementation of our
method. The so-defined operator~$P_N$ is a bounded linear operator
on~$\cH^\ell$ with induced operator norm~$\| P_N \| = 1$, and one
can easily see that it leaves the space~$\overline{\cH}^\ell$
invariant if $\ell \in \Z$. Furthermore, it is straightforward to
show that for any $N \in \N$ we have
\begin{displaymath}
  \dim P_N {\cH}^\ell = N^d
  \qquad\mbox{ and }\qquad
  \dim P_N \overline{\cH}^\ell = N^d - 1 \;. 
\end{displaymath}
For all $\ell \in \N_0$ we would like to point out that 
$(I - P_1) {\cH}^\ell = \overline{\cH}^\ell$. Since this is
an especially useful operator, we introduce the abbreviation
\begin{equation}
  \overline{P} = I - P_1 \; . 
\end{equation}
The operator~$\overline{P}$ satisfies the following useful
identity.
\begin{lemma}
\label{lemma:pbarl2prod}
For arbitrary $u \in \cH^0$ and $v \in \overline{\cH}^0$ we
have the equality
\begin{displaymath}
  \left( \overline{P} u, v \right)_{L^2} =
  (u,v)_{L^2} \; .
\end{displaymath}
\end{lemma}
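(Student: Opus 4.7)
The plan is to exploit the orthogonality of the cosine basis $\{\phi_k\}_{k\in\N_0^d}$ in $L^2(\Omega)$, together with the observation that $\overline{P} = I - P_1$ kills precisely the zero-frequency component, i.e., the $\phi_0$-coefficient, of its argument. Since every element of $\overline{\cH}^0$ has vanishing $\phi_0$-coefficient by the definition in~(\ref{defhlbar}), this removed component cannot couple to $v$ under the $L^2$ inner product.

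Concretely, I would first expand the two functions in the basis $\{\phi_k\}$: write $u = \sum_{k \in \N_0^d} \alpha_k \phi_k \in \cH^0 = L^2(\Omega)$ and $v = \sum_{k \in \N_0^d,\; |k|>0} \beta_k \phi_k \in \overline{\cH}^0$, noting that the constraint $v \in \overline{\cH}^0$ precisely means $\beta_0 = 0$. Next, I would apply the defining formula~(\ref{eqn:defpn}) with $N = 1$: since the only $k \in \N_0^d$ with $|k|_\infty < 1$ is $k = 0$, one has $P_1 u = \alpha_0 \phi_0$, and therefore
\begin{displaymath}
  \overline{P} u \;=\; u - P_1 u \;=\; \sum_{k \in \N_0^d,\; |k|>0} \alpha_k \phi_k.
\end{displaymath}

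Finally, I would compute both sides directly using orthonormality of $\{\phi_k\}$ in $L^2(\Omega)$. On the one hand, $(u,v)_{L^2} = \sum_{k} \alpha_k \beta_k = \sum_{|k|>0} \alpha_k \beta_k$, where the reduction uses $\beta_0 = 0$. On the other hand, $(\overline{P} u, v)_{L^2} = \sum_{|k|>0} \alpha_k \beta_k$, which matches. Equivalently, one can observe $(u,v)_{L^2} - (\overline{P} u, v)_{L^2} = (P_1 u, v)_{L^2} = \alpha_0 (\phi_0, v)_{L^2} = \alpha_0 \beta_0 = 0$.

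There is essentially no obstacle here: the statement is a clean bookkeeping consequence of orthogonality together with the zero-average restriction built into~$\overline{\cH}^0$. The only point that warrants care is making explicit that in the definition~(\ref{eqn:defpn}) with $N=1$, the index set $\{k \in \N_0^d : |k|_\infty < 1\}$ reduces to the single element $k = 0$, so that $P_1$ is indeed the projection onto the span of $\phi_0 \equiv 1$ (the mean-value projection).
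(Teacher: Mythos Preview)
Your proof is correct and follows essentially the same approach as the paper: both arguments observe that $\overline{P}u = u - \alpha_0 \phi_0$ and then check that $(\alpha_0 \phi_0, v)_{L^2} = 0$ because $v \in \overline{\cH}^0$ has vanishing mean (the paper phrases this as $\int_\Omega v\,dx = 0$, while you phrase it as $\beta_0 = 0$).
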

\begin{proof}
This result can be established via direct calculation. Note that
\begin{eqnarray*}
  \left( \overline{P} u, v \right)_{L^2} &=&
    (u - \alpha_0 \phi_0, v)_{L^2} =
    (u,v)_{L^2} - \alpha_0 (\phi_0,v)_{L^2} \\
    &=& (u,v)_{L^2} - \alpha_0 \int_\Omega v(x) \; dx =
    (u,v)_{L^2} - 0 \; ,
\end{eqnarray*}
where for the last step we used the fact that $v \in \overline{\cH}^0$.
\end{proof}
We close this section by deriving a norm bound for the 
infinite cosine series part that is discarded by the 
projection~$P_N$ in terms of a higher-regularity norm.
More precisely, we have the following.
\begin{lemma}[Projection tail estimates]
\label{lemma:projtailest}
Consider two integers $\ell \le m$ and let the function $u \in \overline{\cH}^m$
be arbitrary. Then the projection tail~$(I - P_N)u$ satisfies
\begin{displaymath}
  \| (I - P_N) u \|_{\overline{\cH}^\ell} \; \le \;
  \frac{1}{\pi^{m - \ell} N^{m - \ell}} \,
    \| (I - P_N) u \|_{\overline{\cH}^m} \; \le \;
  \frac{1}{\pi^{m - \ell} N^{m - \ell}} \,
    \| u \|_{\overline{\cH}^m} \;. 
\end{displaymath}
\end{lemma}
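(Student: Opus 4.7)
The plan is to mimic the proof of Lemma~\ref{lemma:bscaleest}, but to exploit the fact that the surviving modes in $(I-P_N)u$ all have much larger wave numbers than the generic lower bound $\kappa_k \ge \pi^2$ that was used there.

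First I would expand $u = \sum_{k\in\N_0^d,\,|k|>0} \alpha_k \phi_k$, so that by the definition~(\ref{eqn:defpn}) the tail is
\begin{displaymath}
  (I - P_N) u \;=\; \sum_{k \in \N_0^d,\, |k|_\infty \ge N} \alpha_k \phi_k \; .
\end{displaymath}
For any multi-index appearing in this sum we have $|k|^2 \ge |k|_\infty^2 \ge N^2$, and therefore $\kappa_k = \pi^2 |k|^2 \ge \pi^2 N^2$. Consequently, for each such $k$,
\begin{displaymath}
  \kappa_k^\ell \;=\; \kappa_k^m \cdot \kappa_k^{\ell - m}
  \;\le\; \kappa_k^m \cdot \bigl( \pi^2 N^2 \bigr)^{\ell - m}
  \;=\; \frac{1}{\pi^{2(m-\ell)} N^{2(m-\ell)}} \, \kappa_k^m \; ,
\end{displaymath}
where the inequality uses $\ell - m \le 0$.

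Plugging this into the definition~(\ref{defhlbarnorm}) of $\|\cdot\|_{\overline{\cH}^\ell}$ and summing gives
\begin{displaymath}
  \| (I - P_N) u \|_{\overline{\cH}^\ell}^2
  \;=\; \sum_{|k|_\infty \ge N} \kappa_k^\ell \alpha_k^2
  \;\le\; \frac{1}{\pi^{2(m-\ell)} N^{2(m-\ell)}} \sum_{|k|_\infty \ge N} \kappa_k^m \alpha_k^2
  \;=\; \frac{1}{\pi^{2(m-\ell)} N^{2(m-\ell)}} \, \| (I - P_N) u \|_{\overline{\cH}^m}^2 \; ,
\end{displaymath}
which after taking square roots yields the first inequality of the lemma.

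For the second inequality, I would use orthogonality with respect to the $\overline{\cH}^m$-inner product: the basis $\{ \kappa_k^{-m/2} \phi_k \}_{|k|>0}$ from Lemma~\ref{lemma:orthoghell} splits into the modes with $|k|_\infty < N$ and those with $|k|_\infty \ge N$, so $P_N$ and $I - P_N$ are orthogonal projections in $\overline{\cH}^m$. Hence $\|(I-P_N)u\|_{\overline{\cH}^m}^2 \le \|P_N u\|_{\overline{\cH}^m}^2 + \|(I-P_N)u\|_{\overline{\cH}^m}^2 = \|u\|_{\overline{\cH}^m}^2$, giving the second bound. There is no real obstacle here; the only thing to watch is the sign of the exponent $\ell - m$ when doing the mode-by-mode comparison, which is precisely what makes replacing $\pi$ by $\pi N$ in the denominator legitimate.
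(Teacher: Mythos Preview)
Your proof is correct and follows essentially the same approach as the paper: both expand in the cosine basis, use that $|k|_\infty \ge N$ forces $\kappa_k \ge \pi^2 N^2$, and then bound $\kappa_k^\ell$ by $(\pi^2 N^2)^{\ell-m}\kappa_k^m$ term by term. Your justification of the second inequality via orthogonality of $P_N$ in $\overline{\cH}^m$ is slightly more explicit than the paper's, which simply notes it as immediate.
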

\begin{proof}
Suppose that $u \in \overline{\cH}^m$ is given by
$u = \sum_{k \in \N_0^d, \; |k|>0} \alpha_k \phi_k$.
Then we have
\begin{eqnarray*}
  \| (I-P_N) u \|_{\overline{\cH}^\ell}^2 & = &
    \sum_{k \in \N_0^d, \;|k|_\infty \ge N} \kappa_k^\ell \alpha_k^2 \; = \;
    \sum_{k \in \N_0^d, \;|k|_\infty \ge N} \frac{\kappa_k^m \alpha_k^2}
    {\kappa_k^{m-\ell}} \\[1.5ex]
  & \le & \sum_{k \in \N_0^d, \;|k|_\infty \ge N}
    \frac{\kappa_k^m \alpha_k^2}{(\pi^2 N^2)^{m-\ell}} \; = \;
    \frac{1}{(\pi^2 N^2)^{m-\ell}} \;
    \| (I-P_N) u \|_{\overline{\cH}^m}^2 \; , 
\end{eqnarray*}
since the estimate $|k|_\infty \ge N$ yields $|k| \ge N$.
\end{proof}
\section{Derivative inverse estimate}\label{subsec:die}
This section is devoted to establishing derivative inverse bound in 
hypothesis~(H2), which is required for Theorem~\ref{nift:thm}, the
constructive implicit function theorem. More precisely, our goal in
the following is to derive a constant~$K$ such that 
\begin{displaymath}
  \left\| (D_u F)^{-1} \right\|_{\cL(Y,X)} \le K \; ,
\end{displaymath}
i.e., we need to find a bound on the operator norm of the inverse
of the Fr\'echet derivative of~$F$ with respect to~$u$. We divide 
the derivation of this estimate into four parts. In
Section~\ref{sec:investoutline} we give an outline of our approach,
introduce necessary definitions and auxiliary results, and present
the main result of this section. This result will be verified in the
following three sections. First, we discuss the finite-dimensional
projection of~$D_u F$ in Section~\ref{sec:findim}. Using this
finite-dimensional operator, we then construct an approximative
inverse to the Fr\'echet derivative in Section~\ref{sec:approxinverse},
before everything is assembled to provide the desired estimate in
the final Section~\ref{sec:comprof}.
\subsection{General outline and auxiliary results}
\label{sec:investoutline}
For convenience of notation in the subsequent discussion, for
fixed parameters and~$u$ we abbreviate the Fr\'echet derivative
of~$F$ by
\begin{equation}  \label{eqn:ldef}
  L v = D_u F(\lambda, \sigma, \mu, u) [v] \; , \quad
  L \in \cL( X, Y ) \; , \quad\mbox{ with }\quad
  X = \overline{\cH}^{2} \; , \quad
  Y = \overline{\cH}^{-2} \; .
\end{equation}
Standard results imply that~$L$ is a bounded linear
operator $L \in \cL(\overline{\cH}^{2},\overline{\cH}^{-2})$, which
explicitly is given by
\begin{equation}\label{eqn:ldefinition}
  L v = -\Delta ( \Delta v + \lambda \, f'(u + \mu) v ) -
    \lambda \sigma v \; . 
\end{equation}
More precisely, note that since the nonlinearity~$f$ is twice continuously differentiable,
and in view of Sobolev's imbedding recalled in~(\ref{eqn:cmcb}), the
function~$f'(u + \mu)$ is continuous on~$\overline{\Omega}$, which
makes the product $\lambda f'(u + \mu) v$ an $L^2(\Omega)$-function,
and therefore $- \Delta(\lambda f'(u + \mu) v) \in \overline{\cH}^{-2}$.
We will also use the abbreviation
\begin{equation} \label{eqn:defq}
  q(x) = \lambda f'(u(x) + \mu) \;. 
\end{equation}
As mentioned earlier, the constructive implicit function theorem
crucially relies on being able to find a bound~$K$ such that
$\|L^{-1}\| \le K$. Our goal is to do so by using a finite-dimensional
approximation for~$L$, since that can be analyzed via rigorous
computational means. Our finite-dimensional approximation for~$L$
is given as follows. For fixed $N \in \N$ define the finite-dimensional
spaces
\begin{displaymath}
  X_N = P_N X
  \qquad\mbox{ and }\qquad
  Y_N = P_N Y \; ,
\end{displaymath}
where the projection operator is given in~(\ref{eqn:defpn}).
Define $L_N: X_N \to Y_N$ by 
\begin{equation} \label{eqn:defln}
  L_N = \left. P_N L \right|_{X_N} \; .
\end{equation}
Let~$K_N$ be a bound on the inverse of the finite-dimensional
operator~$L_N$, i.e., suppose that
\begin{equation} \label{eqn:defkn}
  \left\| L_N^{-1} \right\|_{\cL(Y_N,X_N)} \le K_N \; ,
\end{equation}
where the spaces~$X_N$ and~$Y_N$ are equipped with the norms
of~$X$ and~$Y$, respectively. We will discuss further details
on appropriate coordinate systems and the actual computation
of both~$L_N$ and~$K_N$ in Section~\ref{sec:findim}. Our main
result for this section is as follows.
\begin{theorem}[Derivative inverse estimate]
\label{thm:k}
Assume there is a constant~$\tau > 0$ and an integer $N \in \N$ such that
\begin{displaymath}
  \frac{1}{\pi^2 N^2} \sqrt{ K_N^2  \, \| q \|_\infty^2 +
    C_b^2 \, \frac{1+\pi^4}{\pi^4} \, \| q \|_{\cH^2}^2}
  \; \le \; \tau \; < \; 1 \; ,
\end{displaymath}
where~$K_N$ and~$q$ are defined in~(\ref{eqn:defkn})
and~(\ref{eqn:defq}), respectively. Then the derivative
operator~$L$ in~(\ref{eqn:ldefinition}) satisfies
\begin{displaymath}
  \left\| L^{-1} \right\|_{\cL(X,Y)} \le
  \frac{\max ( K_N, 1) }{1-\tau} \; . 
\end{displaymath}
\end{theorem}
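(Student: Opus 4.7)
The plan is to establish the derivative inverse estimate via a Neumann series argument around an explicit approximate inverse. I would define $T \in \cL(Y, X)$ by
\[
  T \;=\; L_N^{-1} P_N \;+\; (-\Delta)^{-2}(I - P_N),
\]
where $L_N^{-1}$ acts on $P_N g \in Y_N$ and is extended by zero to all of~$Y$. Because $X_N$ and $(I - P_N) X$ have disjoint Fourier supports, they are orthogonal in both $X = \overline{\cH}^2$ and $Y = \overline{\cH}^{-2}$, and a Pythagorean computation combined with Lemma~\ref{lemma:lapiso}, which yields $\|(-\Delta)^{-2}(I-P_N)g\|_{\overline{\cH}^2} = \|(I-P_N)g\|_{\overline{\cH}^{-2}}$, together with hypothesis~\eqref{eqn:defkn}, gives the bound $\|T\|_{\cL(Y,X)} \le \max(K_N, 1)$. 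The theorem will then follow from the estimate $\|I - TL\|_{\cL(X,X)} \le \tau < 1$, since the standard Neumann series argument makes $TL$ invertible on $X$ with $\|(TL)^{-1}\| \le 1/(1-\tau)$ and therefore $\|L^{-1}\|_{\cL(Y,X)} \le \|(TL)^{-1}\| \cdot \|T\| \le \max(K_N,1)/(1-\tau)$, surjectivity of $L$ following either from the symmetric right-inverse estimate or from a standard Fredholm-type argument.

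The heart of the proof is the bound on $E := TL - I$. Writing $v = v_L + v_H$ with $v_L = P_N v$ and $v_H = (I-P_N)v$, and using that $\Delta^2$ and $\lambda\sigma I$ commute with the Fourier projection $P_N$, a direct calculation gives $P_N L v = L_N v_L - P_N \Delta(q v_H)$ and $(I-P_N) L v = \Delta^2 v_H - (I-P_N)\Delta(q v) - \lambda\sigma v_H$. Applying $T$ then produces the decomposition
\[
  (TL - I) v \;=\; -L_N^{-1} P_N \Delta(q v_H) \;-\; (-\Delta)^{-2}(I-P_N) \Delta(q v) \;-\; \lambda\sigma (-\Delta)^{-2} v_H.
\]
The first summand lies in $X_N$ while the remaining two lie in $(I-P_N) X$, so Pythagorean orthogonality in $X$ converts the estimate on $\|E v\|_X^2$ into a sum of squared contributions and is the source of the square root appearing in $\tau$.

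For the first summand, combining $\|L_N^{-1}\| \le K_N$ with the elementary bound $\|\Delta(qv_H)\|_Y \le \|q v_H\|_{L^2} \le \|q\|_\infty \|v_H\|_{L^2}$ and the projection tail estimate $\|v_H\|_{L^2} \le \frac{1}{\pi^2 N^2}\|v\|_X$ from Lemma~\ref{lemma:projtailest} reproduces precisely the contribution $K_N \|q\|_\infty$ scaled by $1/(\pi^2 N^2)$. For the second summand, Lemma~\ref{lemma:lapiso} replaces $\|(-\Delta)^{-2}(I-P_N)\Delta(qv)\|_{\overline{\cH}^2}$ by $\|(I-P_N)\Delta(qv)\|_{\overline{\cH}^{-2}}$; Lemma~\ref{lemma:projtailest} then supplies a factor $1/(\pi^2 N^2)$, while the Banach algebra estimate $\|qv\|_{\cH^2} \le C_b \|q\|_{\cH^2} \|v\|_{\cH^2}$ and the norm equivalence of Lemma~\ref{lemma:sobnobar2bar} contribute the coefficient $C_b \sqrt{1+\pi^4}/\pi^4$ in front of $\|q\|_{\cH^2}$ inside the square root. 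The third summand $-\lambda\sigma(-\Delta)^{-2}v_H$ is the subtlety I expect to be the main obstacle: it is absent from the stated form of $\tau$, so one must either eliminate it by replacing the tail of $T$ with $(\Delta^2 - \lambda\sigma I)^{-1}(I - P_N)$, which annihilates it exactly at the price of reverifying $\|T\| \le \max(K_N,1)$ under a mild side condition on $\lambda\sigma$ and $N$, or else absorb it into the Pythagorean sum using the extra decay factor $1/(\pi^4 N^4)$ coming from Lemma~\ref{lemma:bscaleest}. Once this piece is arranged, the Neumann series step closes the argument.
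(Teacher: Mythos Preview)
Your strategy is exactly the paper's: build an approximate inverse from~$L_N^{-1}$ on low modes and the diagonal inverse on high modes, bound~$\|S\|$ by~$\max(K_N,1)$ via Pythagoras, bound~$\|I-SL\|$ by the displayed~$\tau$ via the two orthogonal pieces, and close with the Neumann series Proposition~\ref{prop:neumann}. The paper, however, defines the tail part from the outset as your proposed fix, namely the Fourier multiplier $-1/(\kappa_k^2+\lambda\sigma)$ (equivalently $-(\Delta^2+\lambda\sigma I)^{-1}$ on~$Y_\infty$), so the spurious $\lambda\sigma(-\Delta)^{-2}v_H$ term never appears. No ``mild side condition on~$\lambda\sigma$ and~$N$'' is needed to reverify~$\|T\|\le 1$: since $\lambda\sigma\ge 0$ one has $(\kappa_k^2+\lambda\sigma)^2\ge\kappa_k^4$, and the same Pythagorean argument you use goes through unchanged.

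Two sign slips in your write-up should be fixed. First, $(I-P_N)Lv = -\Delta^2 v_H - (I-P_N)\Delta(qv) - \lambda\sigma v_H$, not $+\Delta^2 v_H$; with your tail choice $(-\Delta)^{-2}$ this produces $-v_H$ rather than $+v_H$, so $TL-I$ would contain a nonvanishing $-2v_H$ term and the argument would not close. Second, the correct tail replacement is $-(\Delta^2+\lambda\sigma I)^{-1}$, not $(\Delta^2-\lambda\sigma I)^{-1}$, since the diagonal symbol of~$L$ on mode~$k$ is $-\kappa_k^2-\lambda\sigma$. Once these signs are repaired your computation of the two orthogonal error contributions reproduces the paper's constants~$A=K_N\|q\|_\infty/(\pi^2N^2)$ and~$B=C_b\sqrt{1+\pi^4}\,\|q\|_{\cH^2}/(\pi^4N^2)$ verbatim.
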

Before we begin to prove this main theorem, we state a
necessary result which is based on a Neumann series argument
to derive bounds on the operator norm of an inverse of an
operator. This is a standard functional-analytic technique,
which we state here for the reader's convenience. A proof
can be found in~\cite[Lemma~4]{sander:wanner:16a}.
\begin{proposition}[Neumann series inverse estimate]
\label{prop:neumann}
Let $\cA \in \cL(X,Y)$ be an arbitrary bounded linear operator
between two Banach spaces, and let $\cB \in \cL(Y,X)$ be one-to-one.
Assume that there exist positive constants~$\rho_1$ and~$\rho_2$
such that
\begin{displaymath}
  \| I - \cB \cA \|_{\cL(X,X)} \le \rho_1 < 1
  \qquad\mbox{ and }\qquad
  \|\cB\|_{\cL(Y,X)} \le \rho_2 \;. 
\end{displaymath}
Then $\cA$ is one-to-one and onto, and 
\begin{displaymath}
\| \cA^{-1}\|_{\cL(Y,X)} \le \frac{\rho_2}{1-\rho_1} \;.
\end{displaymath}
In subsequent discussions, we will refer to~$\cB$ as an
{\em approximate inverse}.
\end{proposition}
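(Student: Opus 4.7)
The plan is to recognize $\cB\cA$ as a small perturbation of the identity on $X$, invert it by a Neumann series in the Banach algebra $\cL(X,X)$, and then use the injectivity of $\cB$ as a cancellation law to upgrade this into invertibility of $\cA$ itself with the claimed quantitative bound.

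First, I would observe that the hypothesis $\|I - \cB\cA\|_{\cL(X,X)} \le \rho_1 < 1$ places $\cB\cA$ squarely in the geometric-series regime. The standard Neumann expansion $(\cB\cA)^{-1} = \sum_{n=0}^{\infty}(I - \cB\cA)^n$ converges absolutely in operator norm and defines a bounded inverse on $X$ satisfying
\begin{displaymath}
  \left\|(\cB\cA)^{-1}\right\|_{\cL(X,X)} \;\le\; \sum_{n=0}^{\infty} \rho_1^n \;=\; \frac{1}{1-\rho_1} \; .
\end{displaymath}
This is the only general Banach-algebra input I would need.

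Next, I would extract bijectivity of $\cA$ from bijectivity of $\cB\cA$. Injectivity of $\cA$ is immediate: $\cA x = 0$ forces $\cB\cA x = 0$, hence $x = 0$. For surjectivity, given an arbitrary $y \in Y$, I would introduce the explicit candidate $x := (\cB\cA)^{-1}\cB y \in X$; by construction $\cB\cA x = \cB y$, i.e.\ $\cB(\cA x - y) = 0$, and the standing injectivity assumption on $\cB$ yields $\cA x = y$. This simultaneously proves that $\cA$ is onto and identifies the inverse as the operator $\cA^{-1} = (\cB\cA)^{-1}\cB$, which lies in $\cL(Y,X)$.

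The quantitative bound then drops out by submultiplicativity of the operator norm:
\begin{displaymath}
  \left\|\cA^{-1}\right\|_{\cL(Y,X)} \;=\; \left\|(\cB\cA)^{-1}\cB\right\|_{\cL(Y,X)}
  \;\le\; \left\|(\cB\cA)^{-1}\right\|_{\cL(X,X)} \cdot \|\cB\|_{\cL(Y,X)}
  \;\le\; \frac{\rho_2}{1-\rho_1} \; .
\end{displaymath}
The only subtle point in the argument is the surjectivity step. It would be tempting to try to invert $\cB$ directly via the open mapping theorem, but that route would demand independently showing $\cB$ is onto $X$. Passing instead through the explicit preimage $(\cB\cA)^{-1}\cB y$ and using the injectivity of $\cB$ as a cancellation law keeps the proof self-contained and imposes no hypothesis beyond those in the statement.
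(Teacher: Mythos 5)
Your proof is correct, and it is the standard Neumann-series argument: invert $\cB\cA$ by the geometric series, deduce injectivity of $\cA$ from that of $\cB\cA$, and use the injectivity of $\cB$ to pass from $\cB\cA x=\cB y$ to $\cA x=y$, giving $\cA^{-1}=(\cB\cA)^{-1}\cB$ and the bound $\rho_2/(1-\rho_1)$. The paper does not reproduce a proof but defers to Lemma~4 of its reference, where essentially this same argument is given, so there is nothing to add.
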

We are now ready to proceed with the proof of the main result
of the section, Theorem~\ref{thm:k}. For this, we fix all
parameters, as well as $u \in \overline{\cH}^2$. Our goal is
to prove that~$L$ is one-to-one, onto, and has an inverse whose
operator norm is bounded by the value $K = \max(K_N,1)/(1-\tau)$.
\subsection{Finite-dimensional projections of the linearization}
\label{sec:findim}
In this section, we consider~$L_N$, the finite dimensional projection
of the operator~$L$. The linear map~$L_N$ is tractable using rigorous
computational methods, since calculating a finite-dimensional inverse
is something that can be done using numerical linear algebra. To
derive~$L_N$ in more detail, we recall the definitions of the
following projection spaces, all of which are Hilbert spaces:
\begin{displaymath}
  \begin{array}{rclcrclcrcl}
    X & = & \overline{\cH}^2 \;, & \quad &
      X_N & = & P_N X \; , & \quad &
      X_\infty & = & (I- P_N) X \; , \\[1ex]
    Y & = & \overline{\cH}^{-2} \;, & \quad &
      Y_N & = & P_N Y \; , & \quad &
      Y_\infty & = & (I- P_N) Y \;.
  \end{array}
\end{displaymath}
Recall that in~(\ref{eqn:defln}) we defined $L_N: X_N \to Y_N$ via
$L_N = \left. P_N L \right|_{X_N}$. In order to work with this operator
in a straightforward computational manner, we need to find its matrix
representation. Since both~$X_N$ and~$Y_N$ have the basis~$\phi_k$
for all $k \in \N_0^d$ with~$0 < |k|_\infty < N$, one obtains such
a matrix~$B = (b_{k,\ell}) \in \R^{(N^d-1) \times
(N^d-1)}$ via the definition
\begin{displaymath}
  b_{k,\ell} = (L \phi_\ell,\phi_k)_{L^2} =
  (L_N \phi_\ell,\phi_k)_{L^2} \; ,
\end{displaymath}
where~$k,\ell \in \N_0^d$ satisfy $0 < |k|_\infty < N$ and
$0 < |\ell|_\infty < N$.

The above matrix representation characterizes~$L_N$ on the
algebraic level in the following sense. If we consider a function
$v_N \in X_N$, introduce the representations
\begin{displaymath}
  v_N  = \sum_{k \in \N_0^d, \, 0 < |k|_\infty < N}
    \alpha_k \phi_k(x)
  \qquad\mbox{ and }\qquad
  L_N v_N  = \sum_{k \in \N_0^d, \, 0 < |k|_\infty < N}
    \beta_k \phi_k(x) \; ,
\end{displaymath}
and if we collect the numbers~$\alpha_k$ and~$\beta_k$ in
vectors~$\alpha$ and~$\beta$ in the straightforward way, then
we have
\begin{displaymath}
  \beta = B \alpha \; .
\end{displaymath}
This natural algebraic representation has one drawback. We would like
to use the regular Euclidean norm on real vector spaces, as well as
the induced matrix norm, to study the $\cL(X_N,Y_N)$-norm of~$L_N$.
To achieve this, we recall Lemma~\ref{lemma:orthoghell} which shows that
the collection~$\{ \kappa_k^{-1} \phi_k(x) \}$ with~$k$ as above is
an orthonormal basis in $X_N \subset X$, and~$\{ \kappa_k \phi_k(x) \}$
is an orthonormal basis in $Y_N \subset Y$. Thus, we need to use the
representations
\begin{displaymath}
  v_N  = \sum_{k \in \N_0^d, \, 0 < |k|_\infty < N}
    \tilde{\alpha}_k \kappa_k^{-1} \phi_k(x)
  \qquad\mbox{ and }\qquad
  L_N v_N  = \sum_{k \in \N_0^d, \, 0 < |k|_\infty < N}
    \tilde{\beta}_k \kappa_k \phi_k(x)
\end{displaymath}
instead of the ones given above. In order to pass back and forth 
between these two representations we define the diagonal matrix
\begin{displaymath}
  D = \left( \begin{array}{cccc}
      \kappa_1 & 0 & \cdots & 0 \\
      0 & \kappa_2 & \ddots & \vdots \\
      \vdots & \ddots & \ddots & 0 \\
      0 & \cdots & 0 & \kappa_{N-1}
    \end{array} \right) \; .
\end{displaymath}
One can easily see that on the level of vectors we have
\begin{displaymath}
  \alpha = D^{-1} \tilde{\alpha}
  \quad\mbox{ and }\quad
  \beta = D \tilde{\beta} \; ,
  \quad\mbox{ and therefore }\quad
  \tilde{\beta} = D^{-1} B D^{-1} \tilde{\alpha} \; .
\end{displaymath}
In view of Lemma~\ref{lemma:orthoghell} one then obtains
\begin{displaymath}
  \left\| L_N \right\|_{\cL(X_N,Y_N)} =
  \| \tilde{B} \|_2
  \qquad\mbox{ with }\qquad
  \tilde{B} = D^{-1} B D^{-1} \; ,
\end{displaymath}
where~$\| \cdot \|_2$ denotes the regular induced $2$-norm of a matrix.
Moreover, one can verify that we also have the identity
\begin{equation} \label{eqn:lninversenorm}
  \left\| L_N^{-1} \right\|_{\cL(Y_N,X_N)} =
  \left\| \tilde{B}^{-1} \right\|_{L^2} \; .
\end{equation}
In other words, using this formula, we can use interval arithmetic to
establish a rigorous upper bound on the norm of this finite-dimensional
inverse.

So far our considerations applied to any bounded linear operator between
the spaces~$X$ and~$Y$. Specifically for the linearization of the diblock
copolymer equation we can derive an explicit formula for the matrix 
entries~$b_{k,\ell}$. Recall that~$\phi_k$ as defined in~(\ref{eqn:phik})
is an eigenfunction for the negative Laplacian~$-\Delta$ with
eigenvalue~$\kappa_k$. Therefore, for all multi-indices $k,\ell \in \N_0^d$
with $0 < |k|_\infty < N$ and $0 < |\ell|_\infty < N$ one obtains
\begin{eqnarray}
  b_{k,\ell} & = & (L \phi_\ell,\phi_k)_{L^2} \; = \;
    (-\kappa_k^2 - \lambda \sigma) (\phi_k,\phi_\ell)_{L^2} -
    (\Delta(\lambda f'(u+\mu) \phi_\ell),\phi_k)_{L^2}
    \nonumber \\[1ex]
  & = & (-\kappa_k^2 - \lambda \sigma) \delta_{k,\ell} -
    (\Delta(q \phi_\ell),\phi_k)_{L^2} \nonumber \\[1ex]
  & = & (-\kappa_k^2 - \lambda \sigma) \delta_{k,\ell} -
    (q \phi_\ell,\Delta \phi_k)_{L^2} \nonumber \\[1ex]
  & = & -\left( \kappa_k^2 + \lambda \sigma \right) \delta_{k,\ell} +
    \kappa_k \left( q \phi_\ell, \phi_k \right)_{L^2}  \;.
    \label{eqn:defbkell}
\end{eqnarray}
The above formula explicitly gives the entries of the matrix~$B$.
For our computer-assisted proof, we are however interested in the
scaled matrix~$\tilde{B} = D^{-1} B D^{-1}$. One can immediately
verify that its entries~$\tilde{b}_{k,\ell}$ are given by
\begin{equation} \label{eqn:deftildebkell}
  \tilde{b}_{k,\ell} \; = \;
  -\left(1 + \frac{\lambda \sigma}{\kappa_k^2} \right) \delta_{k,\ell} +
    \frac{1}{\kappa_\ell} (q \phi_\ell, \phi_k)_{L^2}
  \quad\mbox{ with }\quad
  q(x) = \lambda f'(\mu + u(x)) \; .
\end{equation}
In view of~(\ref{eqn:lninversenorm}), this formula will allow us
to bound the operator norm of the inverse of the finite-dimensional
projection~$L_N$ using techniques from interval arithmetic.
\subsection{Construction of an approximative inverse}
\label{sec:approxinverse}
The crucial part in the derivation of our norm bound for the
inverse of~$L$ is the application of Proposition~\ref{prop:neumann}.
For this, we need to construct an approximative inverse of this
operator. Since this construction has to be explicit, we will 
approach it in two steps. The first has already been accomplished
in the last section, where we considered a finite-dimensional
projection of~$L$, which can easily be inverted numerically. 
In this section, we complement this finite-dimensional part with
a consideration of the infinite-dimensional complementary space.
For this, we refer the reader again to the definition of the
matrix representation~$B$ in~(\ref{eqn:defbkell}). As $N \to \infty$,
this representation leads to better and better approximations of
the operator~$L$. Note in particular that the entry~$b_{k,\ell}$
is the sum of two terms. The first of these is a diagonal matrix,
and its entries clearly dominate the second term in~(\ref{eqn:defbkell}).
We therefore use the inverse of the first term in order to complement
the inverse of~$L_N$.

To describe this procedure in more detail, suppose that the function
$v \in Y$ is given by
\begin{displaymath}
  v = \sum_{k \in \N_0^d, \, |k|_\infty > 0}
    \alpha_k \phi_k(x)
  = v_N + v_{\infty} \in
  Y_N \oplus Y_\infty \; ,
\end{displaymath}
where we define
\begin{displaymath}
  Y_N = P_N Y
  \qquad\mbox{ and }\qquad
  Y_\infty = \left( I - P_N \right) Y \; .
\end{displaymath}
Using this representation the approximative inverse~$S \in \cL(Y,X)$
of~$L \in \cL(X,Y)$ is defined via the formula
\begin{displaymath}
  S v = L_N^{-1} v_N -
    \sum_{k \in \N_0^d, \, |k|_\infty \ge N} \frac{\alpha_k}
      {\kappa_k^2 + \lambda \sigma} \, \phi_k \; . 
\end{displaymath}
In addition, consider the operator $T = S|_{Y_\infty}$, i.e., let
\begin{displaymath}
  T \sum_{k \in \N_0^d, \, |k|_\infty\ge N} \alpha_k \phi_k =
  -\sum_{k \in \N_0^d, \, |k|_\infty\ge N} \frac{\alpha_k}
    {\kappa_k^2 + \lambda \sigma} \phi_k \; .
\end{displaymath}
One can easily see that~$T : Y_\infty \to X_\infty = (I-P_N)X$
is one-to-one and onto, and in fact we have the identity
\begin{displaymath}
  T^{-1} \sum_{k \in \N_0^d, \, |k|_\infty\ge N} \alpha_k \phi_k =
  -\sum_{k \in \N_0^d, \, |k|_\infty\ge N} \left( \kappa_k^2 +
    \lambda \sigma \right) \alpha_k \phi_k \; ,
\end{displaymath}
which can be rewritten in the form
\begin{equation} \label{eqn:tinv}
  T^{-1} v_\infty = -\left( \Delta^2 v_\infty  +
    \lambda \sigma v_\infty \right) \; . 
\end{equation}
Also, from the definition of~$S$ we get the
alternative representation
\begin{equation} \label{eqn:approxinv}
  Sv = L_N^{-1} v_N + T v_\infty \; .
\end{equation}
To close this section, we now derive a bound on the 
operator norm of~$S$, since this will be needed in the
application of Proposition~\ref{prop:neumann}. As a first
step, we show that $\| T v_\infty \|_X \le \| v_\infty \|_Y$
for all $y_\infty \in Y_\infty$, which follows readily from
\begin{eqnarray*}
  \left\| T \sum_{k \in \N_0^d, \, |k|_\infty\ge N}
    \alpha_k \phi_k \right\|_{X}^2 & = &
    \left\| \sum_{k \in \N_0^d, \, |k|_\infty\ge N}
    \frac{\alpha_k}{\kappa_k^2 + \lambda \sigma} \phi_k
    \right\|_{\overline{\cH}^2}^2 \\[2ex]
  & = &
    \sum_{k \in \N_0^d, \, |k|_\infty\ge N}
    \frac{\alpha_k^2 \kappa_k^2}{(\kappa_k^2 +
    \lambda \sigma)^2} \\[2ex]
  & \le &
    \sum_{k \in \N_0^d, \, |k|_\infty\ge N}
    \frac{\alpha_k^2 \kappa_k^2}{(\kappa_k^2)^2}
  \; = \;
    \sum_{k \in \N_0^d, \, |k|_\infty\ge N}
    \kappa_k^{-2} \alpha_k^2 \\[2ex]
  & = &
    \left\| \sum_{k \in \N_0^d, \, |k|_\infty\ge N}
    \alpha_k \phi_k \right\|_{\overline{\cH}^{-2}}^2
  \; = \;
    \left\| \sum_{k \in \N_0^d, \, |k|_\infty\ge N}
    \alpha_k \phi_k \right\|_Y^2.
\end{eqnarray*}
This estimate in turn implies for all $v = v_N + v_\infty
\in Y_N \oplus Y_\infty$ the estimate
\begin{eqnarray*}
  \| S v \|_X^2 & = &
    \| L_N^{-1} v_N \|_X^2 + \| T v_\infty \|_X^2 \\[2ex]
  & \le &
    \underbrace{\| L_N^{-1} \|_{\cL(Y_N,X_N)}^2}_{\le K_N^2}
    \| v_N \|_Y^2 + \| v_\infty \|_Y^2
    \; \le \; \max(K_N,1)^2 \| v \|_Y^2 \; ,
\end{eqnarray*}
where we used the definition of~$K_N$ from~(\ref{eqn:defkn}).
Altogether, we have shown that
\begin{equation} \label{eqn:sbound}
  \|S\|_{\cL(Y,X)} \le \max(K_N,1) \; . 
\end{equation}
In other words, the operator norm of the approximate inverse~$S$
given in~(\ref{eqn:approxinv}) can be bounded in terms of the
inverse bound for the finite-dimensional projection given
in~(\ref{eqn:defkn}). Furthermore, it follows directly from 
the definition of~$S$ that this operator is one-to-one.
\subsection{Assembling the final inverse estimate}
\label{sec:comprof}
In the last section we addressed two crucial aspects of
Proposition~\ref{prop:neumann}. On the one hand, we provided an
explicit construction for the approximative inverse~$S \in \cL(Y,X)$
of the Fr\'echet derivative~$L$ defined in~(\ref{eqn:ldef}). On the
other hand, we derived an upper bound on the operator norm of~$S$,
which can be computed using the finite-dimensional projection~$L_N$
of~$L$. This in turn provides the constant~$\rho_2$ in
Proposition~\ref{prop:neumann}. In this final subsection, we focus
on the constant~$\rho_1$, i.e., we derive an upper bound on the
norm~$\|I - S L \|_{\cL(X,X)}$, and show how this bound can be made
smaller than one. Altogether, this will complete the proof of the
estimate for the constant~$K$ in the constructive implicit function
theorem, which was given in Theorem~\ref{thm:k}.

Before we begin, recall the abbreviation $q(x) = \lambda f'(u(x) + \mu)$.
From our definitions of the operators~$L \in \cL(X,Y)$, $S \in \cL(Y,X)$,
$L_N \in \cL(X_N,Y_N)$, and~$T \in \cL(Y_\infty,X_\infty)$, as well
as the projection~$P_N$, and using the additive representation $v = v_N + v_\infty
\in Y_N \oplus Y_\infty$, we have the identity
\begin{equation}\label{eqn:lv}
  Lv = \left( L_N v_N - P_N \Delta(q v_\infty) \right) +
       \left( T^{-1} v_\infty - \left( I-P_N \right)
       \Delta(q v) \right) \; ,
\end{equation}
which will be derived in detail in the following calculation. Notice
that the first parentheses contain only terms in the finite-dimensional
space~$Y_N$, while the second parentheses contain terms in~$Y_\infty$.
With this in mind, we have
\begin{eqnarray*}
  Lv & = &
    -\Delta\left( \Delta v + q v \right) - \lambda \sigma v \\[1ex]
  & = & -\Delta^2 v_N - \Delta^2 v_\infty - P_N \Delta (q v_N) -
    (I - P_N) \Delta (q v_N) \\[0.5ex]
    & & \qquad - \Delta (q v_\infty) -
    \lambda \sigma v_N - \lambda \sigma v_\infty \\[1ex]
  & = & \left( -\Delta^2 v_N - P_N \Delta (q v_N) -
    \lambda \sigma v_N \right) - \left( \Delta^2 v_\infty +
    \lambda \sigma v_\infty \right) \\[0.5ex]
    & & \qquad - (I - P_N) \Delta (q v_N) -
    \Delta (q v_\infty) \\[1ex]
  & = & L_N v_N + T^{-1} v_\infty - (I - P_N) \Delta (q v_N) -
    P_N \Delta (q v_\infty) - (I - P_N) \Delta (q v_\infty) \\[1ex]
  & = & L_N v_N + T^{-1} v_\infty - P_N \Delta (q v_\infty) -
    (I - P_N) \Delta (q v) \; .
\end{eqnarray*}
The first two lines follow just from the definitions, projections, and
rearrangements of terms. The third line is a consequence of~(\ref{eqn:lv})
and~(\ref{eqn:tinv}). Finally, the fourth and fifth lines involve only
rearrangements using the projection operator.

Using the above representation~(\ref{eqn:lv}) of the operator~$L$ which
is split along the subspaces~$Y_N$ and~$Y_\infty$, we can now derive an
expression for~$I - SL \in \cL(X,X)$. More precisely, we
have
\begin{equation}\label{eqn:imsl}
  (I - SL)v = L_N^{-1} P_N \Delta(q v_\infty) +
              T (I - P_N) \Delta (q v) \; ,  
\end{equation}
and this will be verified in detail below. Notice that in this
representation, the first term of the right-hand side lies in the
finite-dimensional space~$X_N$, while the second term is contained
in the complement~$X_\infty$. The identity in~(\ref{eqn:imsl}) now
follows from~(\ref{eqn:approxinv}) and
\begin{eqnarray*}
  SL v & = & L_N^{-1} \left( L_N v_N - P_N \Delta(q v_\infty) \right) +
    T \left( T^{-1} v_\infty - (I - P_N) \Delta(qv) \right) \\[1ex]
  & = & v_N - L_N^{-1} P_N \Delta(q v_\infty) +
    v_\infty - T (I - P_N) \Delta(qv) \\[1ex]
  & = & I v  - L_N^{-1} P_N \Delta(q v_\infty) -
    T (I - P_N) \Delta(qv) \; .
\end{eqnarray*}
After these preparation, we can now show that the operator norm
of~$I - SL$ can be expected to be small for sufficiently large $N$. This will provide an estimate
for the constant~$\rho_1$ in Proposition~\ref{prop:neumann}, and
conclude the proof of Theorem~\ref{thm:k}. In order to show that
$\|I - SL \|_{\cL(X,X)}$ is indeed small, we separately bound the
two terms in~(\ref{eqn:imsl}) as
\begin{displaymath}
  \begin{array}{rclcrcl}
    \DS \left\| L_N^{-1} P_N \Delta(q v_\infty) \right\|_X
      & \le & \DS A \|v\|_X & \quad\mbox{ with }\quad &
      \DS A & := & \DS \frac{K_N \|q\|_\infty}{\pi^2 N^2}
      \; , \\[3ex]
    \DS \left\| T (I-P_N) \Delta (q v) \right\|_X & \le &
      \DS B \|v\|_X & \quad\mbox{ with }\quad &
      \DS B & := & \DS \frac{C_b \sqrt{1+\pi^4} \,
      \|q\|_{\cH^2}}{\pi^4 N^2} \; .
  \end{array}
\end{displaymath}
The first of these inequalities is established in the following
calculation, which makes liberal use of Sobolev embeddings and
other established inequalities:
\begin{eqnarray*}
  \left\| L_N^{-1} P_N \Delta(q v_\infty) \right\|_X & \le &
    \left\| L_N^{-1} \right\|_{\cL(Y_N,X_N)} \|P_N
    \Delta(q v_\infty) \|_Y \\[1.5ex]
  & \le & K_N \left\| P_N \Delta(q v_\infty)
    \right\|_{\overline{\cH}^{-2}}
    \; \le \; K_N \left\| \Delta(q v_\infty)
    \right\|_{\overline{\cH}^{-2}} \\[1.5ex]
  & \le &
    K_N \| q v_\infty \|_{{\cH}^{0}}
    \; \le \;
    K_N \|q\|_{\infty} \left\| (I-P_N) v
    \right\|_{\overline{\cH}^0} \\[1.5ex]
  & \le & K_N \|q\|_{\infty} \,
    \frac{\| v \|_{\overline{\cH}^{2}}}{\pi^2 N^2}
    \; = \;
    \frac{K_N \|q\|_\infty}{\pi^2 N^2} \, \|v\|_X
    \; = \;
    A \|v\|_X \; ,
\end{eqnarray*}
where for the last inequality we used Lemma~\ref{lemma:projtailest}.
The second estimate, the one involving the constant~$B$, is verified
as follows, again with help from our previously derived inequalities,
in particular the fact that~$\| T \|_{\cL(Y_\infty,X_\infty)} \le 1$
and Lemmas~\ref{lemma:sobnobar2bar} and~\ref{lemma:projtailest}:
\begin{eqnarray*}
  \left\| T (I-P_N) \Delta (q v) \right\|_X & \le &
    \left\| (I-P_N) \Delta (q v) \right\|_{\overline{\cH}^{-2}}
    \; \le \;
    \frac{\| \Delta (q v)\|_{\overline{\cH}^{0}} }{\pi^2 N^2} \\[1.5ex]
  & = &
    \frac{\left\| \overline{P} (q v)
    \right\|_{\overline{\cH}^{2}}}{\pi^2 N^2}
    \; \le \;
    \frac{\| q  v \|_{\cH^{2}} }{\pi^2 N^2}
    \; \le \;
    \frac{C_b \| q \|_{\cH^{2}} \| v \|_{\cH^{2}} }{\pi^2 N^2} \\[1.5ex]
  & \le & 
    \frac{C_b \| q\|_{\cH^{2}}}{\pi^2 N^2} \cdot
    \frac{\sqrt{1+\pi^4}}{\pi^2} \cdot \|v \|_{\overline{\cH}^{2}}
    \; = \; B \|v\|_X \; .
\end{eqnarray*}
Now that we have established these two inequalities, the proof of
Theorem~\ref{thm:k} can easily be completed using an application of
Proposition~\ref{prop:neumann}. Specifically, the inequalities which
involve the constants~$A$ ands~$B$ combined with~(\ref{eqn:imsl})
imply that
\begin{displaymath}
  \| I  - S L \|_{\cL(X,X)} \; \le \;
  \sqrt{A^2 + B^2} \; = \;
  \frac{1}{\pi^2 N^2} \, \sqrt{K_N^2 \|q\|_\infty^2 +
    C_b^2 \, \frac{1+\pi^4}{\pi^4} \, \|q\|_{\cH^2}^2} \; .
\end{displaymath}
We also know from~(\ref{eqn:sbound}) that $\| S \|_X \le \max(K_N,1)$.
Therefore, we can directly apply Proposition~\ref{prop:neumann} with
the constants $\rho_1 = \sqrt{A^2 + B^2} \le \tau < 1$ and
$\rho_2 = \max(K_N,1)$, and this immediately implies that the
operator~$L \in \cL(X,Y)$ is one-to-one, onto, and the norm of its
inverse operator is bounded via
\begin{displaymath}
  \left\| L^{-1} \right\|_{\cL(Y,X)} \; \le \;
  \frac{\rho_2}{1-\rho_1} \; = \;
  \frac{\max(K_N,1)}{1-\tau} \; .
\end{displaymath}
This completes the proof of Theorem~\ref{thm:k}.
\section{Lipschitz estimates}
\label{sec:cift}
In this section, our goal is to establish the Lipschitz constants needed in 
hypotheses~(H3) and~(H4) required for Theorem~\ref{nift:thm}, the constructive
implicit function theorem. Namely, we need to establish Lipschitz bounds
for the derivatives of~$F$ 
with respect to both~$u$ and with respect to the continuation
parameter. We are considering single-parameter continuation, meaning that
we have three separate situations to discuss, corresponding to the three
different parameters~$\lambda$, $\sigma$, and~$\mu$. Specifically, for~$p$
being one of these three parameters, for a fixed parameter-function
pair~$(p^*,u^*) \in \R \times X$, and for fixed values of~$d_p$ and~$d_u$,
we assume that $|p - p^*| \le d_p$, and $\| u - u^* \|_X \le d_u$. Furthermore,
by a slight abuse of notation we drop the parameters different from~$p$
from the argument list of~$F$ in~(\ref{eqn:deffoperator}). Our goal in
the current section is to obtain tight and easily computable bounds on
the constants~$M_1$ through~$M_4$ in the following two formulas:  
\begin{equation} \label{eqn:lipschitz}
  \begin{array}{rcl}
  \DS \| D_u F ( p, u) - D_u F ( p, u) \| _{\cL(X,Y)} & \le &
    \DS M_1 \;  \| u - u^* \|_X + M_2 \; |p - p^*| \; , \\[1ex]
  \DS \| D_p F ( p, u) - D_p F ( p, u) \| _{\cL(\R,Y)} & \le & 
    \DS M_3 \;  \| u - u^* \|_X + M_4 \; |p - p^*| \; . 
  \end{array}
\end{equation}
These bounds will be determined using standard Sobolev embedding theorems
and the constants from the previous section, for each of the three
parameters~$\lambda$, $\sigma$, and~$\mu$. Notice that throughout this
section, we always assume $\lambda > 0$ and $\sigma \ge 0$, while the
mass~$\mu$ could be a real number of either sign.
\subsection{Variation of the short-range repulsion}
\label{subsec:lips}
We now state the Lipschitz estimates for the constructive implicit
function theorem in the case where~$\lambda$, the short-range repulsion
term, varies and the remaining parameters~$\mu$ and~$\sigma$ are held fixed.
\begin{lemma}[Lipschitz constants for variation of~$\lambda$]
Let $\lambda^* \in \R$ and $u^* \in \overline{\cH}^2$ be arbitrary, and
consider fixed positive constants~$d_{\lambda}$ and~$d_u$. Finally let~$\lambda$
and~$u$ be such that
\begin{displaymath}
  |\lambda-\lambda^*| \le d_{\lambda}
  \quad\mbox{ and }\quad
  \|u-u^*\|_{\overline{\cH}^2 } \le d_u \;. 
\end{displaymath}
Then the Lipschitz constants in~(\ref{eqn:lipschitz}) can be chosen as
\begin{displaymath}
  \begin{array}{rclcrcl}
    \DS M_1 &=& \DS \frac{\overline{C}_m f_{\max}^{(2)} (\lambda^* +
      d_\lambda)}{\pi^2} \; , & \qquad &
    \DS M_2 &=&  \DS \frac{\| f'(u^*+ \mu) \|_\infty}{\pi^2} +
      \frac{\sigma}{\pi^4} \; , \\[2ex]
    \DS M_3 &=& \DS \frac{f^{(1)}_{\max}}{\pi^2} +
      \frac{\sigma}{\pi^4} \; , & \qquad &
    \DS M_4 &=&  0 \; ,
  \end{array}
\end{displaymath}
where~$f_{\max}^{(1)}$ and~$f_{\max}^{(2)}$ are defined as
\begin{equation} \label{eqn:fpmax}
  f^{(p)}_{\max} =
  \max_{|\rho| \le \|u^*\|_\infty + \overline{C}_m d_u}
    |f^{(p)} (\rho + \mu)| \;.  
\end{equation}
These are well-defined since $f$ is a $C^2$-function. 
\end{lemma}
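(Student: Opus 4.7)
The plan is to decompose each of the two Lipschitz differences in \eqref{eqn:lipschitz} into simple pieces and bound each piece using the tools already developed: the isometry property of~$\Delta$ from Lemma~\ref{lemma:lapiso}, the scaling inequality of Lemma~\ref{lemma:bscaleest} between the spaces $\overline{\cH}^\ell$, and the Sobolev embedding with constant $\overline{C}_m$ from Lemma~\ref{lemma:sobzeromass}. A preliminary observation that is used throughout is that whenever $\|u-u^*\|_{\overline{\cH}^2} \le d_u$, Lemma~\ref{lemma:sobzeromass} gives $\|u-u^*\|_\infty \le \overline{C}_m d_u$, so $\|u\|_\infty \le \|u^*\|_\infty + \overline{C}_m d_u$. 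This means the pointwise values of $u+\mu$ stay in the set over which $f^{(p)}_{\max}$ is taken in~\eqref{eqn:fpmax}, so the mean value theorem applied to $f$ and $f'$ yields the pointwise Lipschitz bounds $|f(u+\mu)-f(u^*+\mu)| \le f^{(1)}_{\max} |u-u^*|$ and $|f'(u+\mu) - f'(u^*+\mu)| \le f^{(2)}_{\max} |u-u^*|$.

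For the constant $M_4$, I would simply notice that $F(\lambda,u) = -\Delta^2 u - \lambda \Delta f(u+\mu) - \lambda \sigma u$ is affine in~$\lambda$, so
\begin{displaymath}
  D_\lambda F(\lambda,u) \;=\; -\Delta f(u+\mu) - \sigma u
\end{displaymath}
does not depend on~$\lambda$ at all. Hence the left-hand side of the second line of~\eqref{eqn:lipschitz} involves no explicit $\lambda$-dependence and $M_4 = 0$ works. For $M_3$ I would bound the difference $-\Delta(f(u+\mu)-f(u^*+\mu)) - \sigma(u-u^*)$ in $Y = \overline{\cH}^{-2}$ by estimating each summand separately. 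The first summand, by Lemma~\ref{lemma:lapiso} and the pointwise Lipschitz bound above, is controlled by $\|f(u+\mu)-f(u^*+\mu)\|_{L^2} \le f^{(1)}_{\max}\|u-u^*\|_{L^2}$, and then Lemma~\ref{lemma:bscaleest} with $\ell=0$, $m=2$ gives the factor $1/\pi^2$. The second summand is $\sigma\|u-u^*\|_{\overline{\cH}^{-2}}$, and Lemma~\ref{lemma:bscaleest} with $\ell=-2$, $m=2$ contributes the factor $1/\pi^4$. Adding these gives exactly the stated $M_3$.

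For $M_1$ and $M_2$ the key algebraic step is the splitting
\begin{displaymath}
  \lambda f'(u+\mu) - \lambda^* f'(u^*+\mu)
   \;=\; \lambda\bigl(f'(u+\mu) - f'(u^*+\mu)\bigr) \;+\; (\lambda-\lambda^*)\, f'(u^*+\mu),
\end{displaymath}
so that, applied to an arbitrary $v \in X$,
\begin{displaymath}
  \bigl(D_u F(\lambda,u) - D_u F(\lambda^*,u^*)\bigr)[v]
   \;=\; -\Delta\bigl(\lambda(f'(u+\mu){-}f'(u^*+\mu))\,v\bigr)
         -\Delta\bigl((\lambda-\lambda^*) f'(u^*+\mu)\,v\bigr)
         -(\lambda-\lambda^*)\sigma v.
\end{displaymath}
The first piece is handled as in step two: Lemma~\ref{lemma:lapiso} passes the $\Delta$ to a norm on $\overline{\cH}^0$, then use $\lambda \le \lambda^*+d_\lambda$, the pointwise bound with $f^{(2)}_{\max}$, Lemma~\ref{lemma:sobzeromass} to convert $\|u-u^*\|_\infty$ into $\overline{C}_m\|u-u^*\|_{\overline{\cH}^2}$, and Lemma~\ref{lemma:bscaleest} to obtain $\|v\|_{L^2}\le \|v\|_{\overline{\cH}^2}/\pi^2$; this yields the $M_1$ term. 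The second piece is handled identically but without needing the mean value step, producing the $\|f'(u^*+\mu)\|_\infty/\pi^2$ contribution to $M_2$. The third piece contributes $\sigma/\pi^4$ via Lemma~\ref{lemma:bscaleest} with $\ell=-2$, $m=2$. Collecting terms recovers the claimed $M_1$ and $M_2$.

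There is no real obstacle here; this is a careful bookkeeping argument whose whole content is a clean decomposition of the two derivative differences together with the three scaling/embedding lemmas from Section~\ref{sec:sob}. The only point requiring some care is to confirm that under the hypotheses $|\lambda-\lambda^*|\le d_\lambda$ and $\|u-u^*\|_{\overline{\cH}^2}\le d_u$ the argument of $f$, $f'$, $f''$ remains in the interval defining $f^{(p)}_{\max}$, which is exactly why the embedding constant $\overline{C}_m$ appears inside the definition~\eqref{eqn:fpmax}.
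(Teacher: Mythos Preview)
Your proposal is correct and follows essentially the same route as the paper's proof: the same algebraic splitting of $\lambda f'(u+\mu)-\lambda^* f'(u^*+\mu)$, the same use of Lemmas~\ref{lemma:lapiso}, \ref{lemma:bscaleest}, and~\ref{lemma:sobzeromass}, and the same mean-value argument to localize the values of~$u+\mu$. The only cosmetic difference is that the paper makes explicit the intermediate step $\|\Delta(qv)\|_{\overline{\cH}^{-2}} = \|\overline{P}(qv)\|_{\overline{\cH}^0} \le \|qv\|_{L^2}$ via the projection~$\overline{P}$, whereas you invoke Lemma~\ref{lemma:lapiso} directly; since~$\Delta$ annihilates constants this is the same thing.
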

\begin{proof}
For our choice of constants~$d_\lambda$, $d_u$, reference parameter~$\lambda^* \in \R$
and function~$u^* \in \overline{\cH}^2$, and for arbitrary $v \in \overline{\cH}^2$,
assume that $|\lambda - \lambda^*| \le d_\lambda$ and $\| u - u^*\|_{\overline{\cH}^2}
\le d_u$. We start by deriving expressions for both~$M_1$ and~$M_2$. Notice that we have
\begin{eqnarray*}
  & & \hspace*{-2cm} \| D_uF(\lambda,u)[v] -
    D_uF(\lambda^*,u^*)[v] \|_{\overline{\cH}^{-2}} \\[1ex]
  &\le& \| \Delta ( \lambda f'(u+\mu) v - \lambda^* f'(u^*+\mu)v )
    \|_{\overline{\cH}^{-2}} + \sigma \, |\lambda - \lambda^*|
    \|v\|_{\overline{\cH}^{-2}} \\[1ex]
  &\le& \| \overline{P} ( \lambda f'(u+\mu) v - \lambda^* f'(u^*+\mu)v )
    \|_{\overline{\cH}^0} + \sigma \, |\lambda - \lambda^*| \, \frac{1}{\pi^4} \,
    \|v\|_{\overline{\cH}^2} \\[1ex]
  &\le& \|  \lambda f'(u+\mu) v - \lambda^* f'(u^*+\mu)v \|_{L^2} +
    \frac{\sigma}{\pi^4}\,|\lambda - \lambda^*| \, \|v\|_{\overline{\cH}^2} \\[1ex]
  &\le& \| \lambda f'(u+\mu) - \lambda^* f'(u^*+\mu)   \|_\infty \, \|v\|_{L^2} +
    \frac{\sigma}{\pi^4}\,|\lambda - \lambda^*| \, \|v\|_{\overline{\cH}^2} \\[1ex]
&\le& \left( \frac{1}{\pi^2} \|  \lambda f'(u+\mu)  - \lambda^* f'(u^*+\mu) \|_\infty
    + |\lambda - \lambda^*| \, \frac{\sigma}{\pi^4}\right) \|v\|_{\overline{\cH}^2} \; . 
\end{eqnarray*}
The first estimate follows straightforwardly from the definition of the Fr\'echet
derivative~(\ref{eqn:deffrechetderivative}), while the second one uses the fact that
the Laplacian is an isometry (cf.\ Lemma~\ref{lemma:lapiso}) and the Banach scale
estimate between~$\overline{\cH}^{-2}$ and~$\overline{\cH}^{2}$ (cf.\ Lemma~\ref{lemma:bscaleest}).
The third estimate follows from~$\|\overline{P}\| = 1$, as well as the fact
that~$\overline{\cH}^0$ and~$L^2(\Omega)$ are equipped with the same norm. Finally,
the fourth estimate is straightforward, and the factor~$1/\pi^2$ in the fifth estimate
follows from $v \in \overline{\cH}^{2} \subset \overline{\cH}^0$ and the estimate in
Lemma~\ref{lemma:bscaleest}.

The above estimate shows that the operator norm of the difference of the two
Fr\'echet derivatives is bounded by the expression in parentheses. The first of
these two terms will now be estimated further. For this, note first that
\begin{eqnarray*}
  & & \| \lambda f'(u+\mu) - \lambda^* f'(u^*+\mu) \|_\infty \\[1ex]
  & & \qquad\qquad \le \;
    |\lambda| \, \| f'(u+\mu) - f'(u^*+\mu) \|_\infty +
    | \lambda-\lambda^*| \, \| f'(u^*+\mu) \|_\infty \; . 
\end{eqnarray*}
For fixed $x \in \Omega$, we know from the mean value theorem that there exists
a number~$\xi(x)$ between~$u(x)$ and~$u^*(x)$ such that
\begin{displaymath}
  | f'(u(x)+\mu) -  f'(u^*(x)+\mu) | \le
    |f''(\xi(x)+\mu)| \; |u(x)-u^*(x)| \;.
\end{displaymath}
Since~$\xi(x)$ is contained between~$u(x)$ and~$u^*(x)$ for all
$x \in \Omega$, the function~$\xi$ is bounded. Combining this fact with
the definition of~$\overline{C}_m$ in~(\ref{eqn:cmcb}) we get
\begin{displaymath}
  \| \xi \|_\infty \le \| u^*\|_\infty + \| u - u^* \|_\infty \le
  \|u^*\|_\infty + \overline{C}_m \|u - u^*\|_{\overline{\cH}^2} \le
  \|u^*\|_\infty + \overline{C}_m d_u \;,
\end{displaymath}
and therefore
\begin{eqnarray*}
  & & \hspace*{-2cm}
  \| \lambda f'(u+\mu) - \lambda^* f'(u^*+\mu) \|_\infty \\[1ex]
  & \le &
    |\lambda| \, f^{(2)}_{\max}\, \| u - u^*\|_\infty +
    |\lambda-\lambda^*|\, \| f'(u^*+\mu) \|_\infty \\[1ex]
  & \le & |\lambda| \, f^{(2)}_{\max}\, \overline{C}_m \, 
    \| u - u^*\|_{\overline{\cH}^2} +
    |\lambda-\lambda^*|\, \| f'(u^*+\mu) \|_\infty \;, 
\end{eqnarray*}
where~$f^{(2)}_{\max}$ is defined in~(\ref{eqn:fpmax}). Incorporating this
into the previous estimate, we see that
\begin{eqnarray*}
  & & \hspace*{-1.2cm}
    \| D_uF(\lambda,u) - D_uF(\lambda^*,u^*) \|_{\cL(\overline{\cH}^2,
    \overline{\cH}^{-2})} \\[1ex]
  &\le& \left( \frac{\overline{C}_m \, f^{(2)}_{\max} \,
    (\lambda^* + d_{\lambda})}{\pi^2} \right) \|u - u^*\|_{\overline{\cH}^2} +
    \left( \frac{\|f'(u^* + \mu)\|_\infty}{\pi^2} +
    \frac{\sigma}{\pi^4}\right) |\lambda - \lambda^*| \; . 
\end{eqnarray*}
This equation directly gives the values of the Lipschitz constants~$M_1$ and~$M_2$
given in the statement of the lemma.

We now turn our attention to the remaining constants~$M_3$ and~$M_4$. The Fr\'echet
derivative of $F$~with respect to~$\lambda$ is given by
\begin{displaymath}
  D_\lambda F(\lambda,u) = - \Delta f(u+ \mu) - \sigma u \;. 
\end{displaymath}
Using almost identical steps as the calculation of~$M_1$ and~$M_2$, we get
\begin{eqnarray*}
  & & \hspace*{-2cm}
    \|D_\lambda F(\lambda,u) - D_\lambda
    F(\lambda^*,u^*)\|_{\overline{\cH}^{-2}} \\[1ex]
  & \le & \| \Delta (f(u+\mu)-f(u^*+\mu))\|_{\overline{\cH}^{-2}} +
    |\sigma| \, \| u - u^*\|_{\overline{\cH}^{-2}} \\[1ex]
  &\le& \| f(u+\mu)-f(u^*+\mu)\|_{L^2} + \frac{\sigma}{\pi^4} \,
    \| u - u^*\|_{\overline{\cH}^{2}} \\[1ex]
  &\le& f^{(1)}_{\max} \, \| u - u^*\|_{L^2} + \frac{\sigma}{\pi^4} \,
    \| u - u^*\|_{\overline{\cH}^{2}} \\[1ex]
  &\le& \left( \frac{f^{(1)}_{\max}}{\pi^2} + \frac{\sigma}{\pi^4} \right)
    \| u - u^*\|_{\overline{\cH}^{2}} \;.
\end{eqnarray*}
Notice that in estimating the norm of this difference of Fr\'echet derivatives
we use the standard identification of~$\cL(\R,\overline{\cH}^{-2})$
with~$\overline{\cH}^{-2}$. Furthermore, in the above inequalities, we
have made liberal use of the constructive Sobolev embedding results from
the previous section. This gives the constants~$M_3$ and~$M_4$ given in
the statement of the lemma. 
\end{proof}
\subsection{Variation of the long-range elasticity}
\label{subsec:sigma}
We now establish Lipschitz constants for the case when the parameter~$\sigma$
varies and both~$\lambda$ and~$\mu$ are held fixed. 
\begin{lemma}[Lipschitz constants for variation of~$\sigma$]
Let $\sigma^* \in \R$ and $u^* \in \overline{\cH}^2$ be arbitrary, and
consider fixed positive constants~$d_{\sigma}$ and~$d_u$. Finally let~$\sigma$
and~$u$ be such that
\begin{displaymath}
  |\sigma-\sigma^*| \le d_{\sigma}
  \quad\mbox{ and }\quad
  \|u-u^*\|_{\overline{\cH}^2 } \le d_u \;. 
\end{displaymath}
Then the Lipschitz constants in~(\ref{eqn:lipschitz}) can be chosen as
\begin{displaymath}
  M_1 \; = \; \frac{\lambda \, f^{(2)}_{\max} \, \overline{C}_m}{\pi^2} \; ,
  \qquad
  M_2 \; = \; M_3 \; = \; \frac{\lambda}{\pi^4} \; ,
  \qquad
  M_4 \; = \; 0\; ,
\end{displaymath}
where the value of~$f^{(2)}_{\max}$ is defined in~(\ref{eqn:fpmax}).
\end{lemma}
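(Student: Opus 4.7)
The plan is to follow the same blueprint as the preceding lemma for variation of $\lambda$, but adapted to the simpler $\sigma$-dependence, which enters the operator $F$ only in the single term $-\lambda \sigma u$. Since this term is linear in both $\sigma$ and $u$, the bookkeeping collapses considerably: the $\sigma$-contribution to $D_u F$ is just $-\lambda \sigma\, v$, and $D_\sigma F(\sigma,u) = -\lambda u$ has no $\sigma$-dependence at all, which is precisely why I expect $M_4 = 0$.

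First I would write the difference of the Fr\'echet derivatives explicitly as
\begin{displaymath}
  D_u F(\sigma,u)[v] - D_u F(\sigma^*,u^*)[v] =
  -\lambda\,\Delta\bigl( (f'(u+\mu) - f'(u^*+\mu))\,v \bigr)
  - \lambda(\sigma - \sigma^*)\,v \; .
\end{displaymath}
I would estimate the two summands separately in $\overline{\cH}^{-2}$. For the first, the Laplacian isometry (Lemma~\ref{lemma:lapiso}) lets me pass to the $\overline{\cH}^0$-norm of the $\overline{P}$-projection of the product, which by $\|\overline{P}\| = 1$ is bounded by its $L^2$-norm. H\"older and the mean value theorem (combined with the definition of $f_{\max}^{(2)}$ in~(\ref{eqn:fpmax}), which requires the $\overline{C}_m$-bound on $\|\xi\|_\infty$ exactly as in the preceding lemma) give
\begin{displaymath}
  \|f'(u+\mu) - f'(u^*+\mu)\|_\infty \le f_{\max}^{(2)}\,\overline{C}_m\,\|u - u^*\|_{\overline{\cH}^2}\; ,
\end{displaymath}
and one final application of Lemma~\ref{lemma:bscaleest} to pass from $\|v\|_{L^2}$ to $\tfrac{1}{\pi^2}\|v\|_{\overline{\cH}^2}$ yields exactly $M_1 = \lambda f_{\max}^{(2)} \overline{C}_m/\pi^2$. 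For the second summand I would just apply Lemma~\ref{lemma:bscaleest} twice (from $\overline{\cH}^{-2}$ down to $\overline{\cH}^2$, picking up two factors of $1/\pi^2$), obtaining the coefficient $\lambda/\pi^4$ of $|\sigma-\sigma^*|$, i.e.\ $M_2 = \lambda/\pi^4$.

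Finally, for hypothesis~(H4) I would use that $D_\sigma F(\sigma,u) = -\lambda u$, so under the standard identification of $\cL(\R,\overline{\cH}^{-2})$ with $\overline{\cH}^{-2}$ one has
\begin{displaymath}
  \|D_\sigma F(\sigma,u) - D_\sigma F(\sigma^*,u^*)\|_{\cL(\R,\overline{\cH}^{-2})}
  = \lambda \|u - u^*\|_{\overline{\cH}^{-2}}
  \le \frac{\lambda}{\pi^4}\,\|u-u^*\|_{\overline{\cH}^2}\; ,
\end{displaymath}
where the last step is again Lemma~\ref{lemma:bscaleest}. This gives $M_3 = \lambda/\pi^4$ and makes the $|\sigma-\sigma^*|$-dependence vanish, so $M_4 = 0$.

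There is no genuine obstacle here: everything reduces to the scale estimate between $\overline{\cH}^{-2}$ and $\overline{\cH}^{2}$ and the already-established Sobolev embedding constant $\overline{C}_m$. The only step worth writing out carefully is the mean-value-theorem estimate for $f'(u+\mu)-f'(u^*+\mu)$, which is essentially copied verbatim from the previous lemma and is the only place where the Sobolev embedding $\overline{C}_m$ enters. Everything else is a direct application of the two isometry/scale lemmas.
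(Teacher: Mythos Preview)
Your proposal is correct and follows essentially the same approach as the paper: the same splitting of $D_uF(\sigma,u)[v]-D_uF(\sigma^*,u^*)[v]$ into the nonlinear term (handled via the Laplacian isometry, mean value theorem, and the embedding constant $\overline{C}_m$) and the linear $-\lambda(\sigma-\sigma^*)v$ term (handled by the scale estimate), and the same direct computation for $D_\sigma F=-\lambda u$. One small remark: the passage $\|v\|_{\overline{\cH}^{-2}}\le\pi^{-4}\|v\|_{\overline{\cH}^{2}}$ is a single application of Lemma~\ref{lemma:bscaleest} with $m-\ell=4$, not two, but your resulting constant is correct.
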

\begin{proof}
We start by computing the constants~$M_1$ and~$M_2$. Holding~$\mu$ and~$\lambda > 0$
fixed in the equation for~$D_uF$, we are able to follow very similar arguments as in
the $\lambda$-varying case, including the use of the Sobolev embedding formulas and
the mean value theorem. The resulting estimate is given by
\begin{eqnarray*}
  & & \hspace*{-2cm} \| D_uF(\sigma,u)[v] -  D_uF(\sigma^*,u^*)[v]
    \|_{\overline{\cH}^{-2}} \\[1ex]
  & \le & \| \Delta (\lambda (f'(u + \mu) - f'(u^*+\mu)) v) \|_{\overline{\cH}^{-2}} +
    \lambda \, |\sigma-\sigma^*| \, \|v\|_{\overline{\cH}^{-2}} \\[1ex]
  & \le & \lambda \, \|f'(u+\mu) - f'(u^*+\mu)\|_\infty \, \| v \|_{L^2} +
    \lambda \, |\sigma-\sigma^*| \, \|v\|_{\overline{\cH}^{-2}}\\[1ex]
  & \le &\left( \frac{ \lambda \, f^{(2)}_{\max} \,  \overline{C}_m}{\pi^2} \right)
    \|u-u^* \|_{\overline{\cH}^2} \, \| v \|_{\overline{\cH}^2} +
    \left(\frac{\lambda}{\pi^4} \right) \, |\sigma-\sigma^*| \,
    \|v\|_{\overline{\cH}^{2}}\;. 
\end{eqnarray*}
This establishes constants~$M_1$ and~$M_2$ given in the lemma. 
We now turn our attention to the constants~$M_3$ and~$M_4$. 
The derivative of~$F$ with respect to~$\sigma$ is given by
\begin{displaymath}
  D_\sigma F(\sigma,u) = - \lambda u \;. 
\end{displaymath}
Therefore, once again Lemma~\ref{lemma:bscaleest},
we get
\begin{displaymath}
  \|D_\sigma F(\sigma,u) - D_\sigma F(\sigma^*,u^*)\|_{\overline{\cH}^{-2}}
  \; \le \;
  \lambda \, \|u-u^*\|_{\overline{\cH}^{-2}}
  \; \le \;
  \frac{\lambda}{\pi^4} \, \|u-u^*\|_{\overline{\cH}^2 }\; ,
\end{displaymath}
which gives the constants~$M_3$ and~$M_4$ stated in the lemma. 
\end{proof}
\subsection{Varying the relative proportion of the two polymers}
\label{subsec:mu}
In this final subsection we now consider the third parameter variation,
namely that of~$\mu$. 
\begin{lemma}[Lipschitz constants for variation of~$\mu$]
Let $\mu^* \in \R$ and $u^* \in \overline{\cH}^2$ be arbitrary, and
consider fixed positive constants~$d_{\mu}$ and~$d_u$. Finally let~$\mu$
and~$u$ be such that
\begin{displaymath}
  |\mu-\mu^*| \le d_{\mu}
  \quad\mbox{ and }\quad
  \|u-u^*\|_{\overline{\cH}^2 } \le d_u \;. 
\end{displaymath}
Then the Lipschitz constants in~(\ref{eqn:lipschitz}) can be chosen as
\begin{displaymath}
  M_1 \; = \; \frac{\lambda \, f^{(2)}_{\max,\mu} \, \overline{C}_m}{\pi^2} \; ,
  \qquad
  M_2 \; = \; M_3 \; = \; \frac{\lambda \, f^{(2)}_{\max,\mu}}{\pi^2} \; ,
  \qquad
  M_4 \; = \; \lambda \; f^{(2)}_{\max,\mu} \; ,
\end{displaymath}
where the constant~$f^{(2)}_{\max, \mu}$ is defined as
\begin{equation}\label{eqn:fpmumax}
  f^{(2)}_{\max, \mu} = \max_{ |\rho| \le \| u^* + \mu^*\|_{\infty} +
  \overline{C}_m d_u + d_\mu   } |f''(\rho)| \; . 
\end{equation}
\end{lemma}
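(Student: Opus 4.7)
The proof will follow the template established in the previous two lemmas, exploiting the fact that with $\lambda > 0$ and $\sigma \ge 0$ held fixed, the only dependence on $\mu$ in $F$ enters through the nonlinearity $\lambda f(u+\mu)$. In particular, the term $-\lambda \sigma v$ in $D_u F$ does not depend on $\mu$, so it will cancel in the difference $D_u F(\mu,u) - D_u F(\mu^*,u^*)$, and similarly $D_\mu F(\mu,u) = -\lambda \Delta f'(u+\mu)$ has no $\sigma$-contribution. This is the source of the absence of $\sigma/\pi^4$ terms which appeared in the previous two lemmas.

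For the constants $M_1$ and $M_2$, I would start with the identity
\begin{displaymath}
  D_u F(\mu,u)[v] - D_u F(\mu^*,u^*)[v] = -\lambda \Delta\bigl( (f'(u+\mu) - f'(u^*+\mu^*)) v \bigr),
\end{displaymath}
then apply Lemma~\ref{lemma:lapiso} together with $\|\overline{P}\| = 1$ to pass to the $L^2$-norm, estimate $\|gv\|_{L^2} \le \|g\|_\infty \|v\|_{L^2}$, and finally use Lemma~\ref{lemma:bscaleest} to control $\|v\|_{L^2}$ by $\|v\|_{\overline{\cH}^2}/\pi^2$. The key step is then a pointwise application of the mean value theorem to write
\begin{displaymath}
  |f'(u(x)+\mu) - f'(u^*(x)+\mu^*)| \le |f''(\xi(x))| \cdot \bigl( |u(x)-u^*(x)| + |\mu - \mu^*| \bigr),
\end{displaymath}
where $\xi(x)$ lies between $u(x)+\mu$ and $u^*(x)+\mu^*$. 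The bound $|\xi(x)| \le \|u^*+\mu^*\|_\infty + \overline{C}_m d_u + d_\mu$ follows from the triangle inequality combined with the Sobolev embedding constant from Lemma~\ref{lemma:sobzeromass}, which yields $|f''(\xi(x))| \le f^{(2)}_{\max,\mu}$ as defined in~(\ref{eqn:fpmumax}). Converting $\|u-u^*\|_\infty$ to $\overline{C}_m \|u-u^*\|_{\overline{\cH}^2}$ in the first term then produces the stated values of $M_1$ and $M_2$.

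For $M_3$ and $M_4$, I would use
\begin{displaymath}
  D_\mu F(\mu,u) - D_\mu F(\mu^*,u^*) = -\lambda \Delta\bigl(f'(u+\mu) - f'(u^*+\mu^*)\bigr),
\end{displaymath}
bound its $\overline{\cH}^{-2}$-norm by $\lambda\|f'(u+\mu) - f'(u^*+\mu^*)\|_{L^2}$ using the Laplacian isometry, and then apply the same mean value theorem computation as above, but this time keeping the $L^2$-norm in place. Since $|\Omega| = 1$, we have $\|\mu - \mu^*\|_{L^2} = |\mu-\mu^*|$, and Lemma~\ref{lemma:bscaleest} gives $\|u-u^*\|_{L^2} \le \|u-u^*\|_{\overline{\cH}^2}/\pi^2$. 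These two different scalings explain why $M_3$ carries the factor $1/\pi^2$ while $M_4$ does not.

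The only genuine subtlety, compared with the $\lambda$- and $\sigma$-variation cases, is that here both $u$ and $\mu$ appear inside the argument of $f'$, so the bound on $|\xi(x)|$ must absorb both perturbations, necessitating the slightly enlarged definition of $f^{(2)}_{\max,\mu}$ in~(\ref{eqn:fpmumax}) as opposed to $f^{(2)}_{\max}$ in~(\ref{eqn:fpmax}); this is the main bookkeeping item but involves no new analytic ingredient beyond the tools already developed in Sections~\ref{sec:fun} and~\ref{sec:sob}.
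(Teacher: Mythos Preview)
Your proposal is correct and follows essentially the same approach as the paper: the same cancellation of the $-\lambda\sigma v$ term, the same use of the Laplacian isometry and $\|\overline{P}\|=1$ to reduce to $L^2$, the same pointwise mean value theorem argument with the enlarged range in $f^{(2)}_{\max,\mu}$, and the same splitting of $\|(u+\mu)-(u^*+\mu^*)\|$ via the triangle inequality and Sobolev embedding. Your explanation of why $M_4$ lacks the $1/\pi^2$ factor (namely that $\|\mu-\mu^*\|_{L^2}=|\mu-\mu^*|$ while $\|u-u^*\|_{L^2}\le \pi^{-2}\|u-u^*\|_{\overline{\cH}^2}$) is exactly the mechanism in the paper's final chain of inequalities.
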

\begin{proof}
Using a similar format to the last two proofs, we consider~$\lambda > 0$
and $\sigma \ge 0$ to be fixed constants and only allow~$\mu$ to vary. 
The we have
\begin{eqnarray*}
  & & \hspace*{-2cm} \| D_uF(\mu,u)[v] - D_uF(\mu^*,u^*)[v]
    \|_{\overline{\cH}^{-2}} \\[1ex]
  &\le& \| \Delta ( \lambda ( f'(u + \mu) - f'(u^* + \mu^*))v)
    \|_{\overline{\cH}^{-2}} \\[1ex]
  &\le& \lambda \| f'(u + \mu) - f'(u^* + \mu^*)\|_{\infty} \,
    \| v \|_{L^2} \\[1ex]
  &\le& \frac{\lambda}{\pi^2} \| f'(u + \mu) - f'(u^* + \mu^*)\|_{\infty} \,
    \| v \|_{\overline{\cH}^{2}} \; . 
\end{eqnarray*}
As in the previous calculations, we use the mean value theorem to bound
the value of the maximum norm $\| f'(u + \mu) - f'(u^* + \mu^*)\|_{\infty}$. 
To do so, note that if a real value~$\rho$ is between the two
numbers~$u^*(x)+\mu$ and~$u(x) + \mu^*$ for some $x \in \Omega$,
then one has
\begin{eqnarray*}
  |\rho| &\le& \| u + \mu^*\|_{\infty} + |\mu - \mu^*| \\[1ex]
  &\le& \|u^*+\mu^*\|_\infty + \| u - u^*\|_\infty + |\mu - \mu^*| \\[1ex]
  &\le& \|u^*+\mu^*\|_\infty + \overline{C}_m \| u - u^*\|_{\overline{\cH}^2}
    + |\mu - \mu^*| \le \|u^*+\mu^*\|_\infty + \overline{C}_m d_u + d_\mu \; .
\end{eqnarray*}
Thus, by the mean value theorem, followed by the use of our Sobolev embedding
results, one further obtains
\begin{eqnarray*}
  \| f'(u + \mu) - f'(u^* + \mu^*)\|_\infty &\le&
    f^{(2)}_{\max, \mu} \, \| (u + \mu) - (u^* + \mu^*) \|_\infty \\[1ex]
  &\le& f^{(2)}_{\max, \mu} \, \left( \overline{C}_m
    \|u-u^*\|_{\overline{\cH}^2} + |\mu-\mu^*| \right) \; , 
\end{eqnarray*}
and combining this with our previous estimate we finally deduce
\begin{displaymath}
  \| D_uF(\mu,u) -  D_uF(\mu^*,u^*) \|_{\cL(\overline{\cH}^{-2},
    \overline{\cH}^{2})}
  \; \le \;
  \frac{\lambda \; f^{(2)}_{\max,\mu} }{\pi^2}
    \left( \overline{C}_m \|u-u^*\|_{\overline{\cH}^2} +
    |\mu-\mu^*| \right) \; . 
\end{displaymath}
This gives the constants~$M_1$ and~$M_2$. We now look at the bounds
for~$M_3$ and~$M_4$. The derivative of~$F$ with respect to $\mu$ is
given by
\begin{displaymath}
  D_\mu F(\mu,u) = -\Delta (\lambda f'(u + \mu)) \;. 
\end{displaymath}
By similar reasoning as before, we then get
\begin{eqnarray*}
  \|D_\mu F(\mu,u) - D_\mu F(\mu^*, u^*)\|_{\overline{\cH}^{-2}}
    &=& \lambda\, \| \Delta(f'(u + \mu) -
    f'(u^*+\mu^*))\|_{\overline{\cH}^{-2}} \\[1ex]
  &\le& \lambda\, \| f'(u + \mu) - f'(u^* + \mu^*)\|_{L^2} \\[1ex]
  &\le& \lambda\, f_{\max,\mu}^{(2)} \; \|(u + \mu) -
    (u^* + \mu^*)\|_{L^2} \\[1ex]
  &\le& \lambda \, f_{\max,\mu}^{(2)} \, \left( \frac{1}{\pi^2}
    \|u-u^*\|_{\overline{\cH}^2} + |\mu-\mu^*| \right) \; . 
\end{eqnarray*}
This gives the constants~$M_3$ and~$M_4$ and completes the proof of
the lemma. 
\end{proof}
With the above lemma we have completed the discussion of all of the
Lipschitz constant bounds for all three equation parameters. 
\section{Illustrative examples}
\label{sec:eg}
In this section, we present some examples of validated equilibrium solutions 
in order to illustrate the power of our theoretical validation method. In
particular, the theoretical methods developed above can be used to produce
a {\em validated region\/} in parameter cross phase space. We emphasize that
this section is only intended to present proof of concept. We have not made
any attempt to optimize our results or to add computational methods to speed
up the code. For example, the interval arithmetic package INTLAB~\cite{rump:99a}
that we have used is not written in parallel, and we have not attempted to parallelize
any of our algorithms. As another example, in the past we have found that careful
preconditioning can speed up the computation time significantly. Rather than
add any of these techniques at this stage, we have chosen to reserve numerical
considerations for a future paper, in which we will also address additional
questions such as how to use these methods iteratively to validate branches
of solutions.
\begin{figure} \centering
  \includegraphics[width=0.7\textwidth]{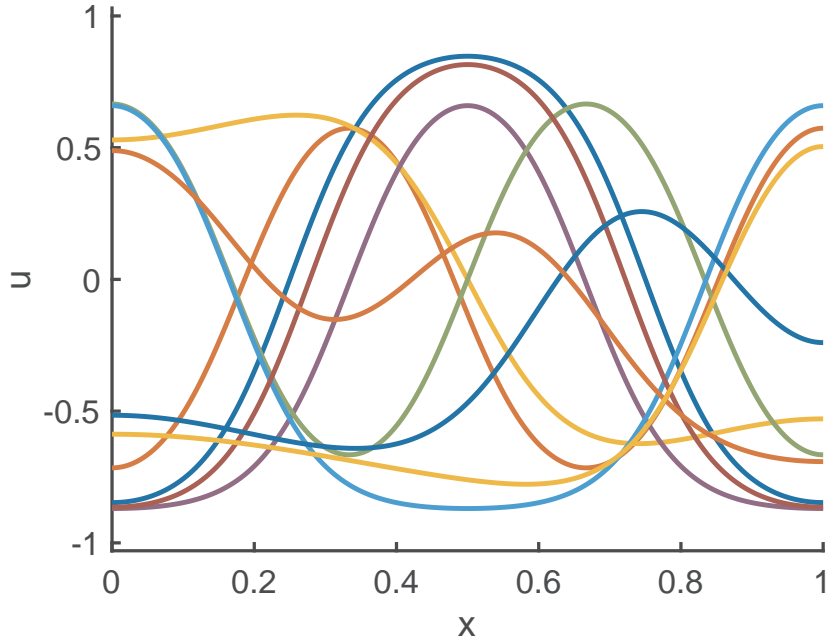}
  \caption{\label{fig:1d}
           Ten sample validated one-dimensional equilibrium solutions.
           For all solutions we choose $\lambda = 150$ and $\sigma = 6$.
           Three of the solutions have total mass $\mu=0$, three are for
           mass $\mu = 0.1$, three for $\mu = 0.3$, and finally one
           for $\mu = 0.5$.}
\end{figure}
\begin{figure} \centering
  \includegraphics[width=0.45\textwidth]{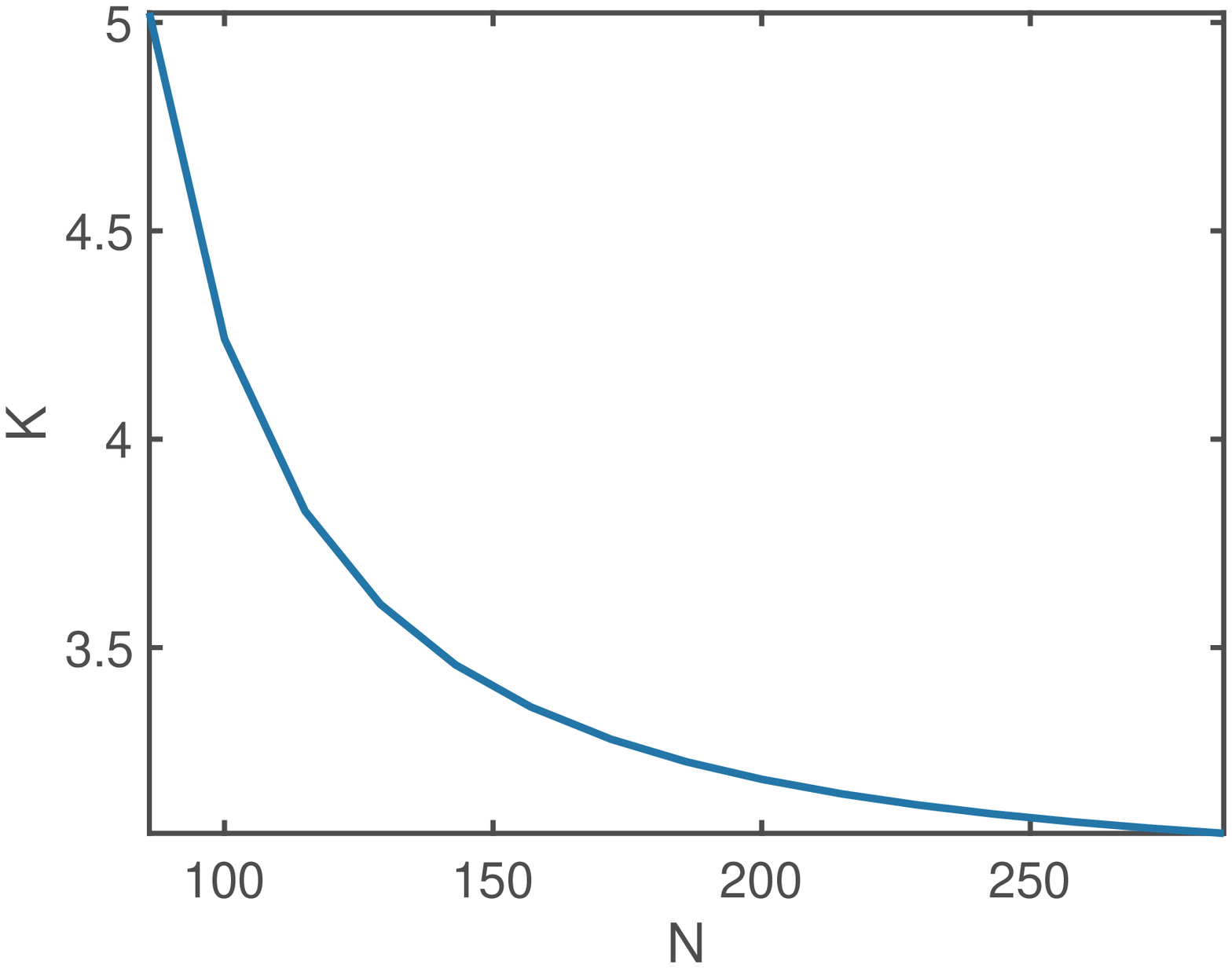}
  \hspace*{0.5cm}
  \includegraphics[width=0.45\textwidth]{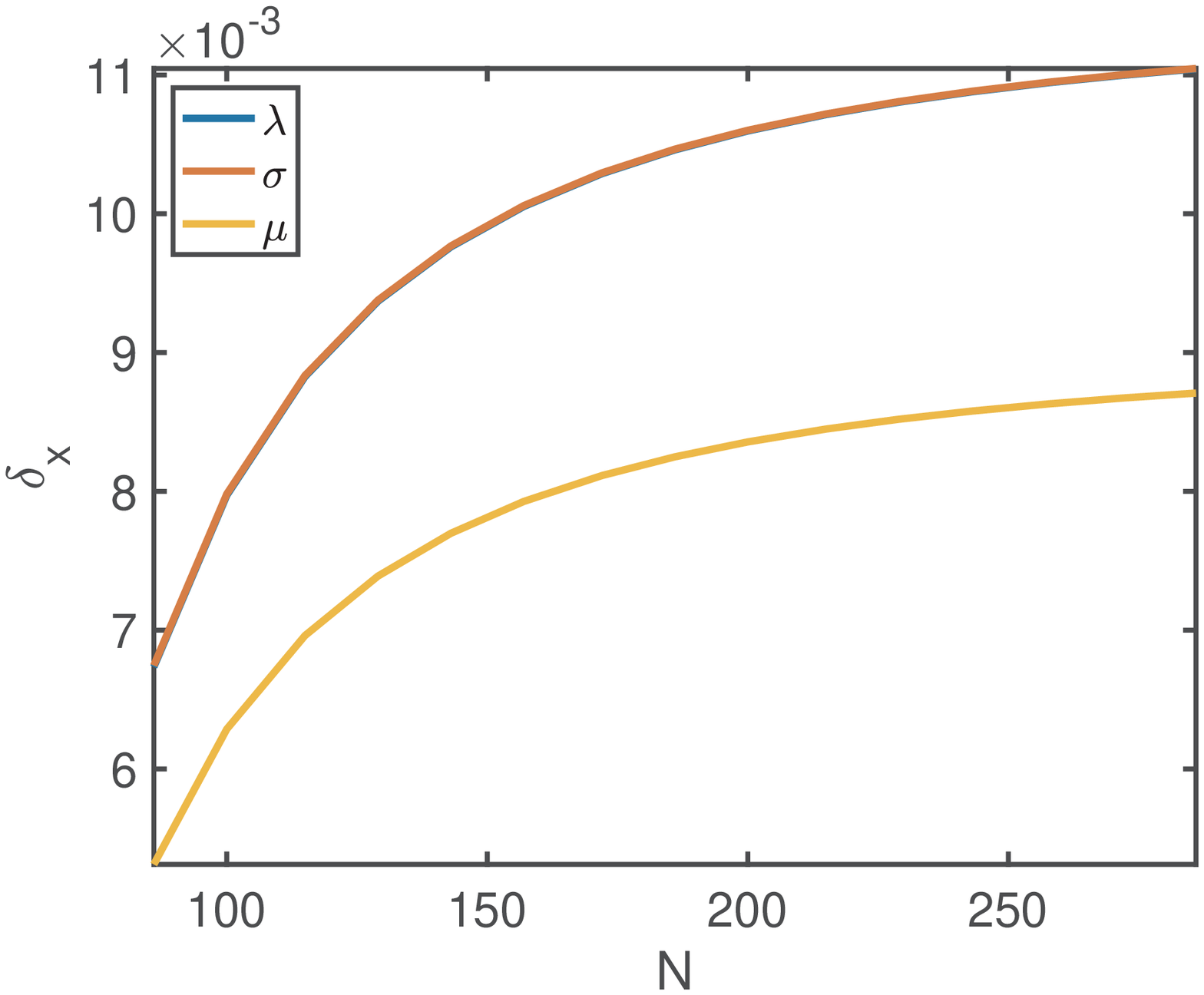} \\[2ex]
  \includegraphics[width=0.45\textwidth]{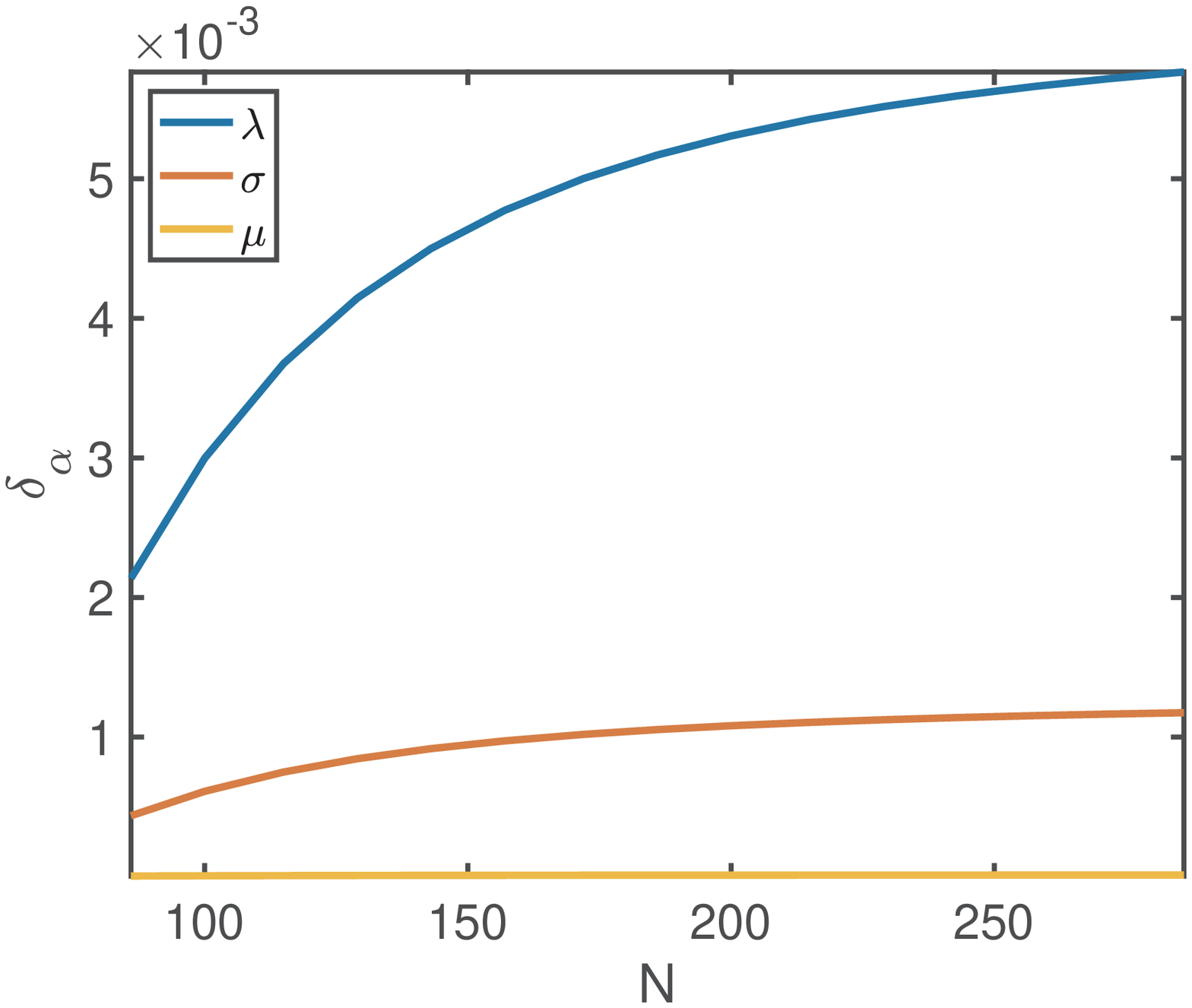}
  \caption{\label{fig:tradeoff}
           There is a tradeoff between high-dimensional calculations and
           optimal results. The top left figure shows how the bound of~$K$
           varies with the dimension of the truncated approximation matrix
           used to calculate~$K_N$. These calculations are for dimension one,
           but a similar effect occurs in higher dimensions as well. The top
           right figure shows the corresponding estimate for~$\delta_x$, and
           the bottom panel shows the estimate for~$\delta_\alpha$,
           where~$\alpha$ is each of the three parameters. The size of
           the validated interval grows larger as the truncation dimension
           grows, but with diminishing returns on the computational
           investment.}
\end{figure}
\begin{table}
  \begin{tabular}{|c|c|c||c|c|c|}
  \hline
  $\mu$ & $K$ & $N$ & $P$  & $\delta_\alpha$  & $\delta_x$ \\
  \hline\hline
    $0$ & 6.2575  & 89	& $\lambda$ & 0.0016     & 0.0056   \\
                  &			&& $\sigma$  & 2.9259e-04 & 0.0056 \\
                  &			&& $\mu$     & 2.8705e-06 & 0.0044 \\ \hline
    $0.1$ & 6.4590 & 104 & $\lambda$ & 0.0011     & 0.0050  \\
            &			&& $\sigma$  & 2.5369e-04 & 0.0050 \\
            &			&& $\mu$     & 2.5579e-06 &  0.0041  \\ \hline      
    $0.5$ & 3.1030  & 74 & $\lambda$ & 0.0052    & 0.0107   \\
            &			&& $\sigma$  &  0.0011  &  0.0106  \\
            &			&& $\mu$     & 1.2871e-05  & 0.0092 \\ \hline
  \end{tabular}
  \vspace*{0.3cm}
  \caption{\label{table:1d}
           A sample of the one-dimensional solution validation
           parameters for three typical solutions. In each case, we
           use $\sigma  = 6$ and~$\lambda = 150$. If we had chosen a
           larger value of~$N$, we could significantly improve the results.}
\end{table}

Under the hypotheses of Theorem~\ref{nift:thm}, the constructive implicit
function theorem, for each~$\delta_\alpha$ and~$\delta_x$ satisfying both
parts of~(\ref{nift:thm2}), we are guaranteed that the solution is uniquely
contained in the corresponding $(\delta_\alpha,\delta_x)$-box, where~$\alpha$
is the chosen of the three parameters. In fact, if we fix~$\delta_\alpha$
small enough, then there are a range of values of~$\delta_x$ bounded below
by the quadratic second equation and above by the linear first equation.
We can view the region bounded by the lower limit of~$\delta_x$ as an
{\em accuracy region\/}, within which the equilibrium is guaranteed to lie;
and the region bounded by the upper limit of~$\delta_x$ is a {\em uniqueness
region\/}, which contains the accuracy region, within which the solution is
guaranteed to be unique. If~$\delta_\alpha$ is chosen to be the point for
which the line and curve in~(\ref{nift:thm2}) intersect, then this is the
largest possible value of~$\delta_\alpha$ for which the theorem holds, and
the accuracy and uniqueness regions coincide. In our calculations we have
validated using this maximal interval in parameter space, and we have done
the calculation of the interval size for each of the three parameters.
\begin{figure} \centering
  \includegraphics[width=0.3\textwidth]{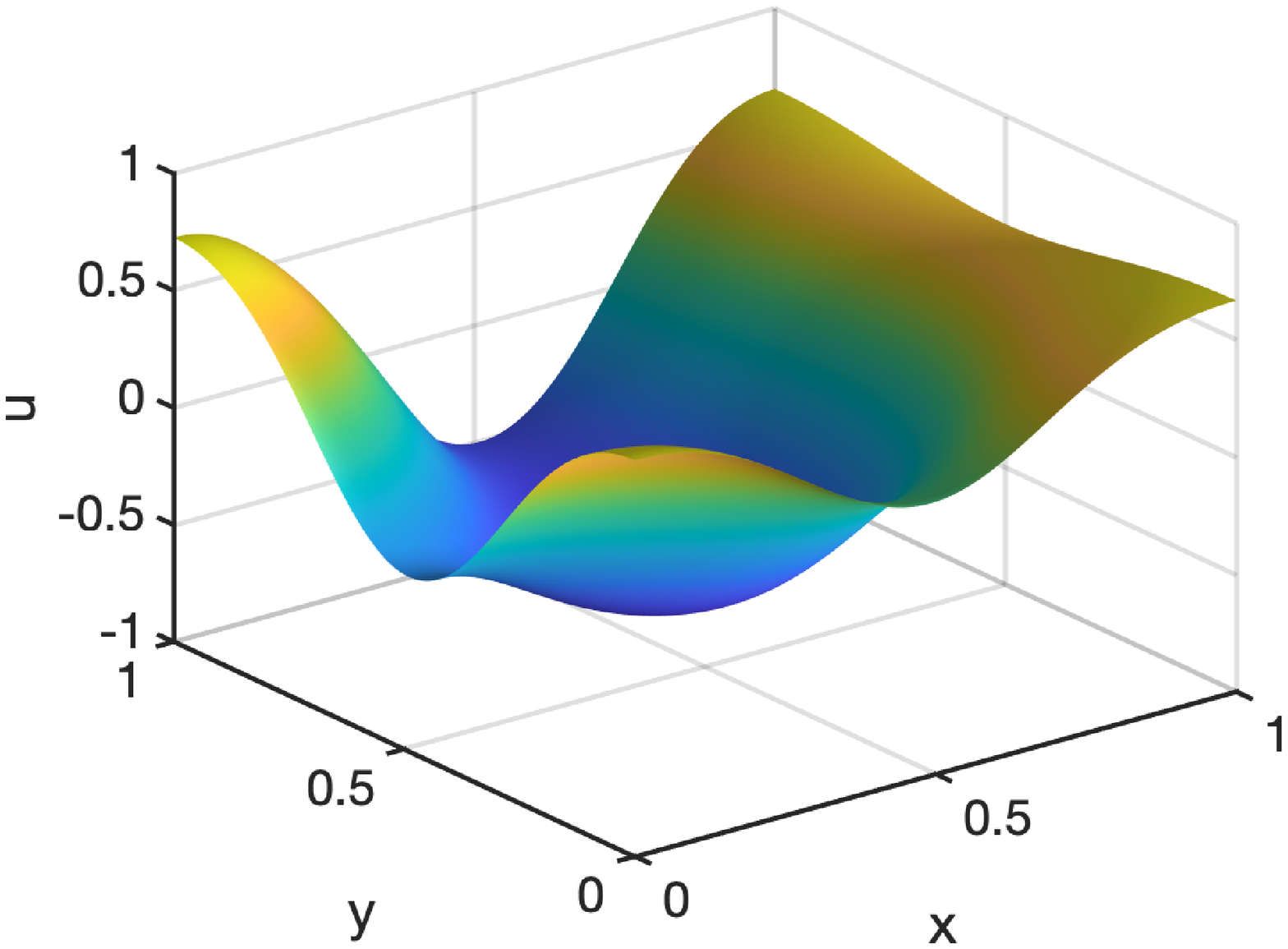}
  \includegraphics[width=0.3\textwidth]{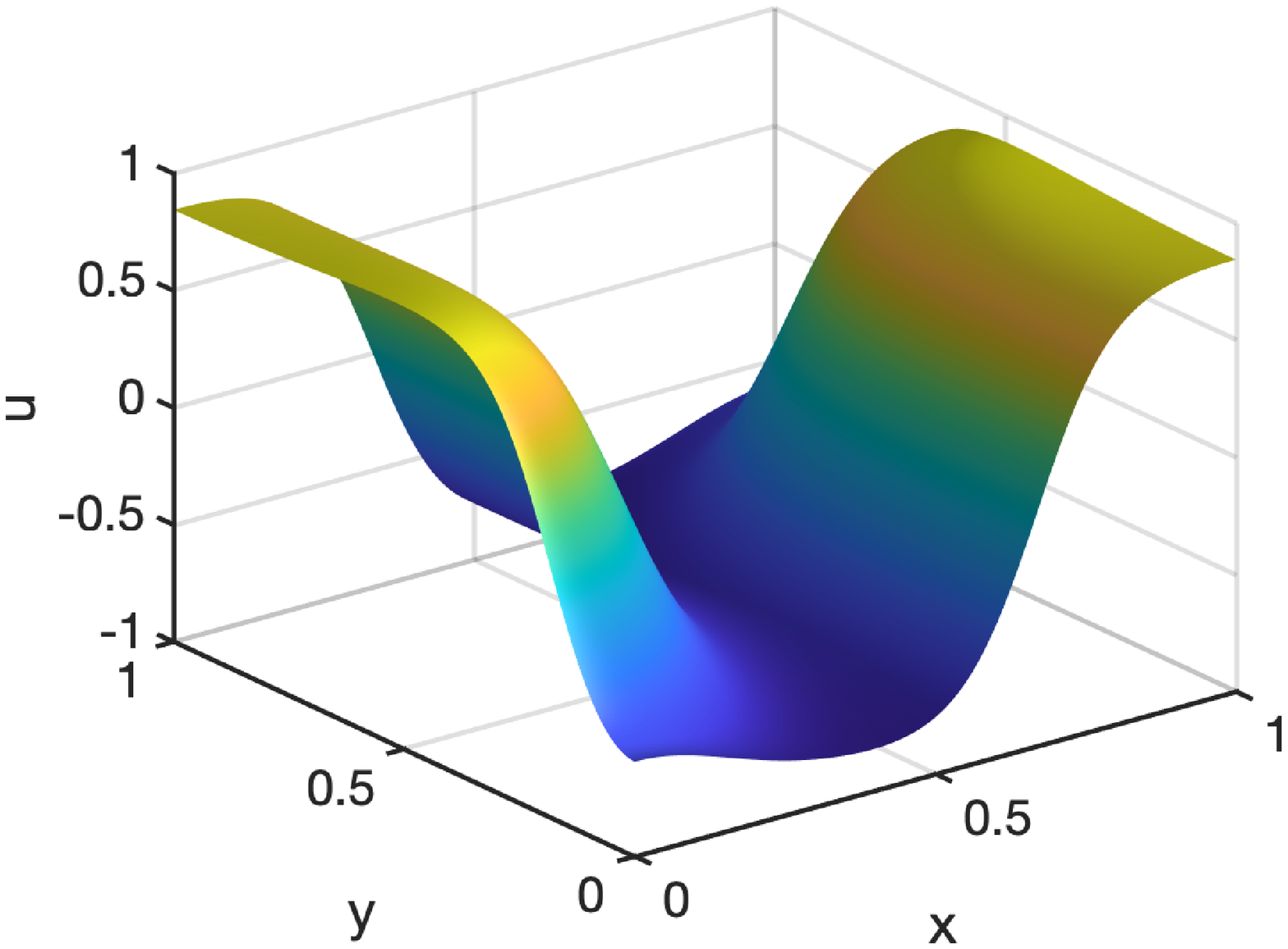}
  \includegraphics[width=0.3\textwidth]{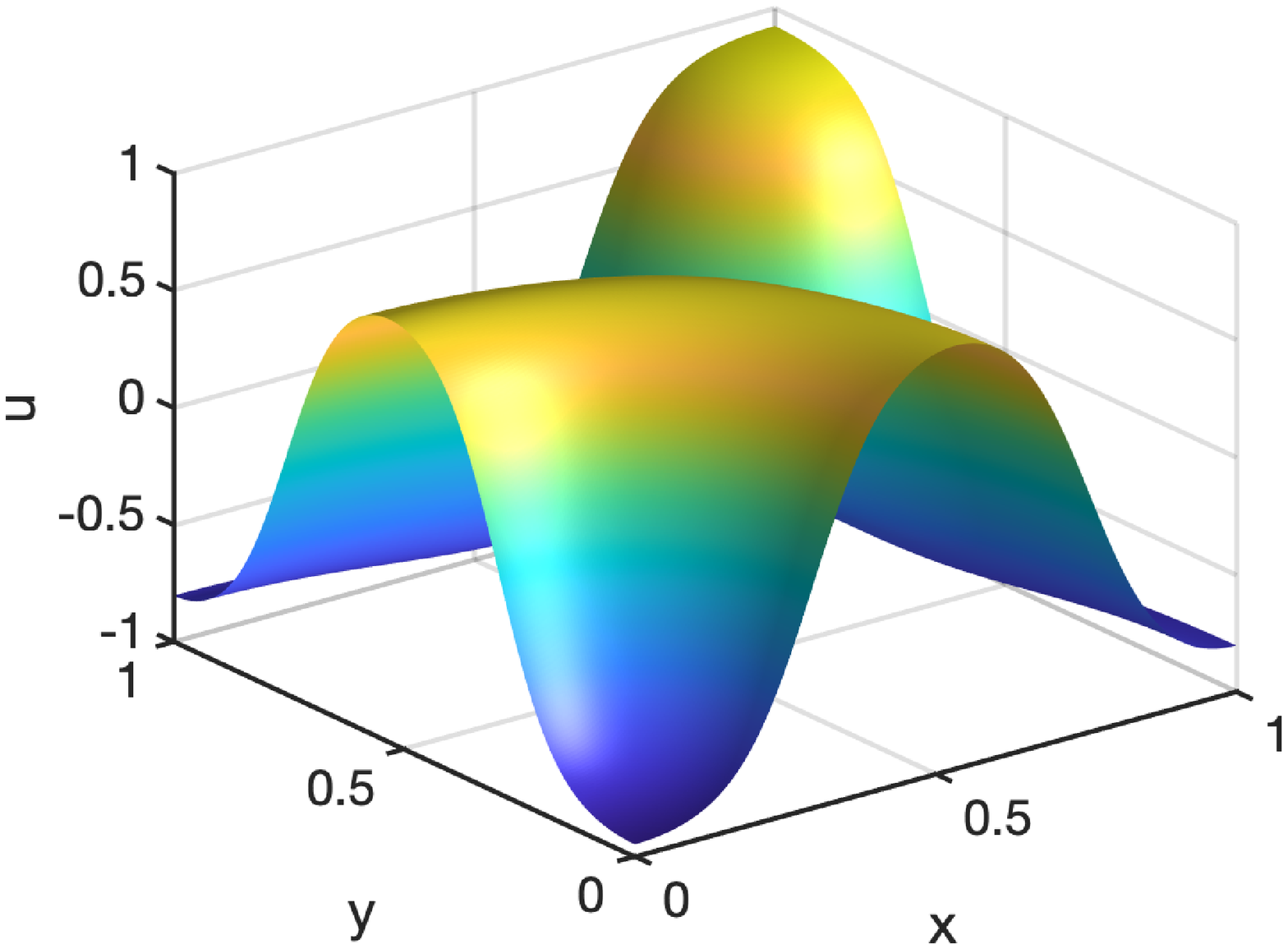}\\[2ex]
  \includegraphics[width=0.3\textwidth]{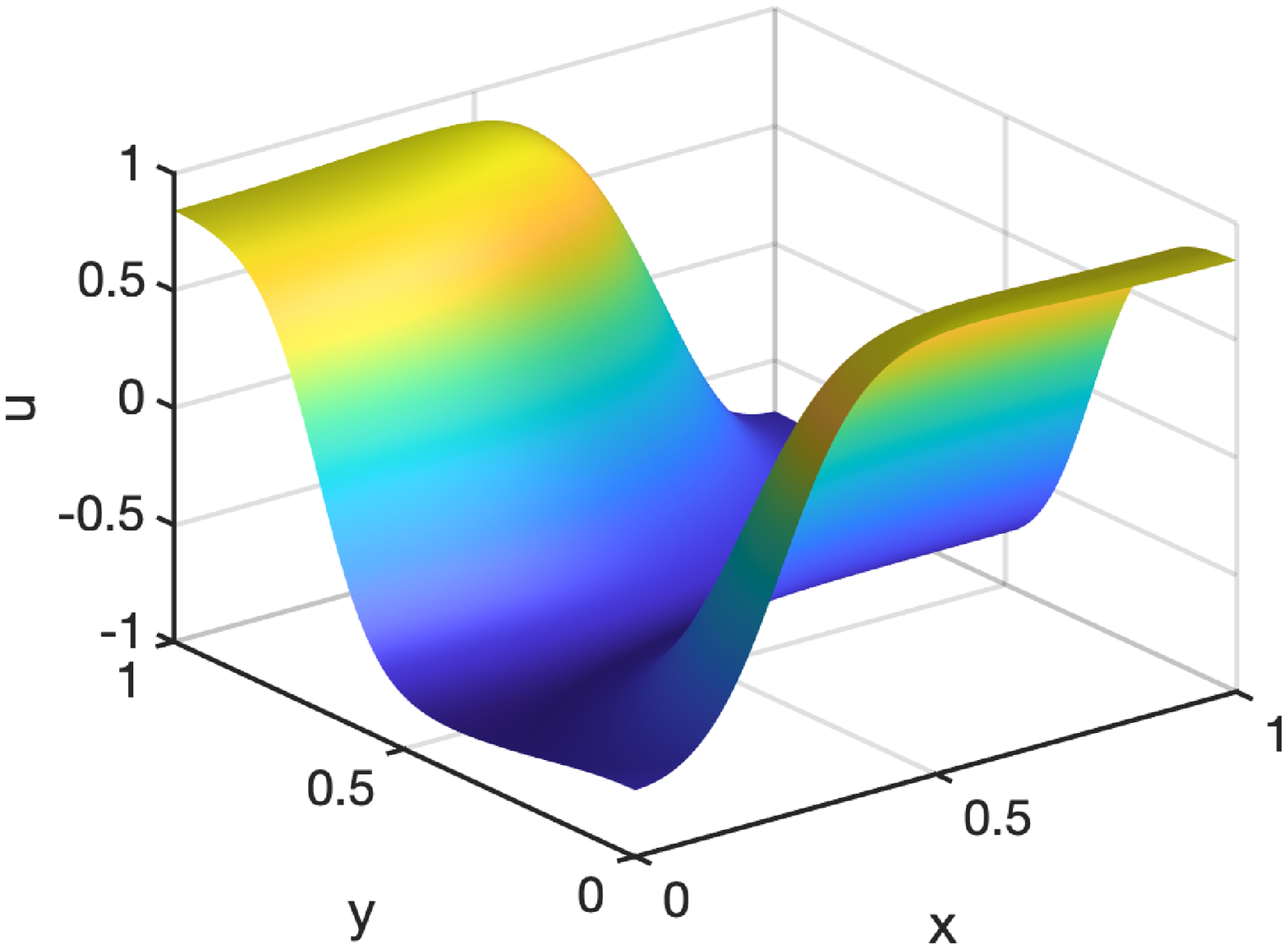}
  \includegraphics[width=0.3\textwidth]{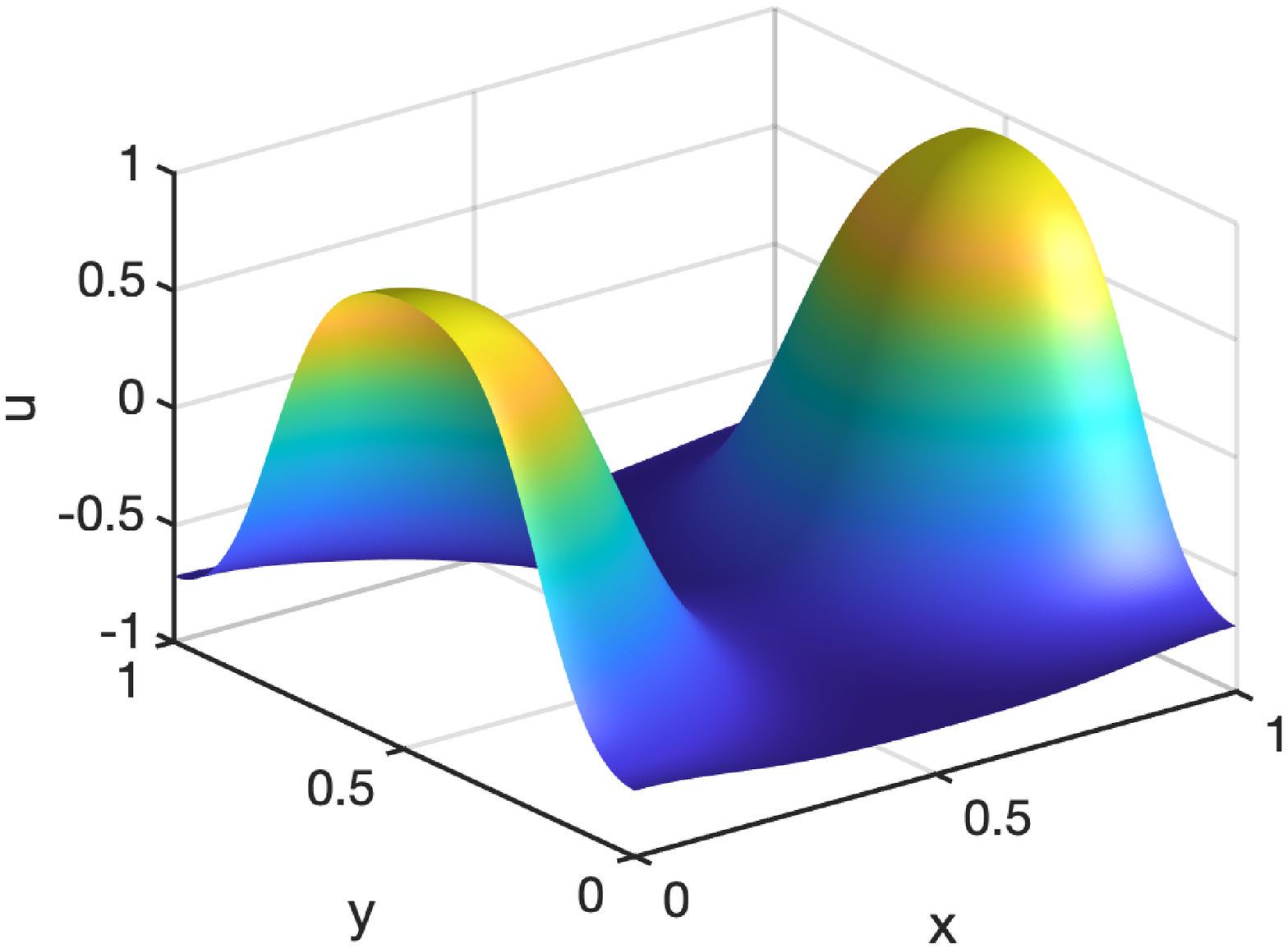}
  \includegraphics[width=0.3\textwidth]{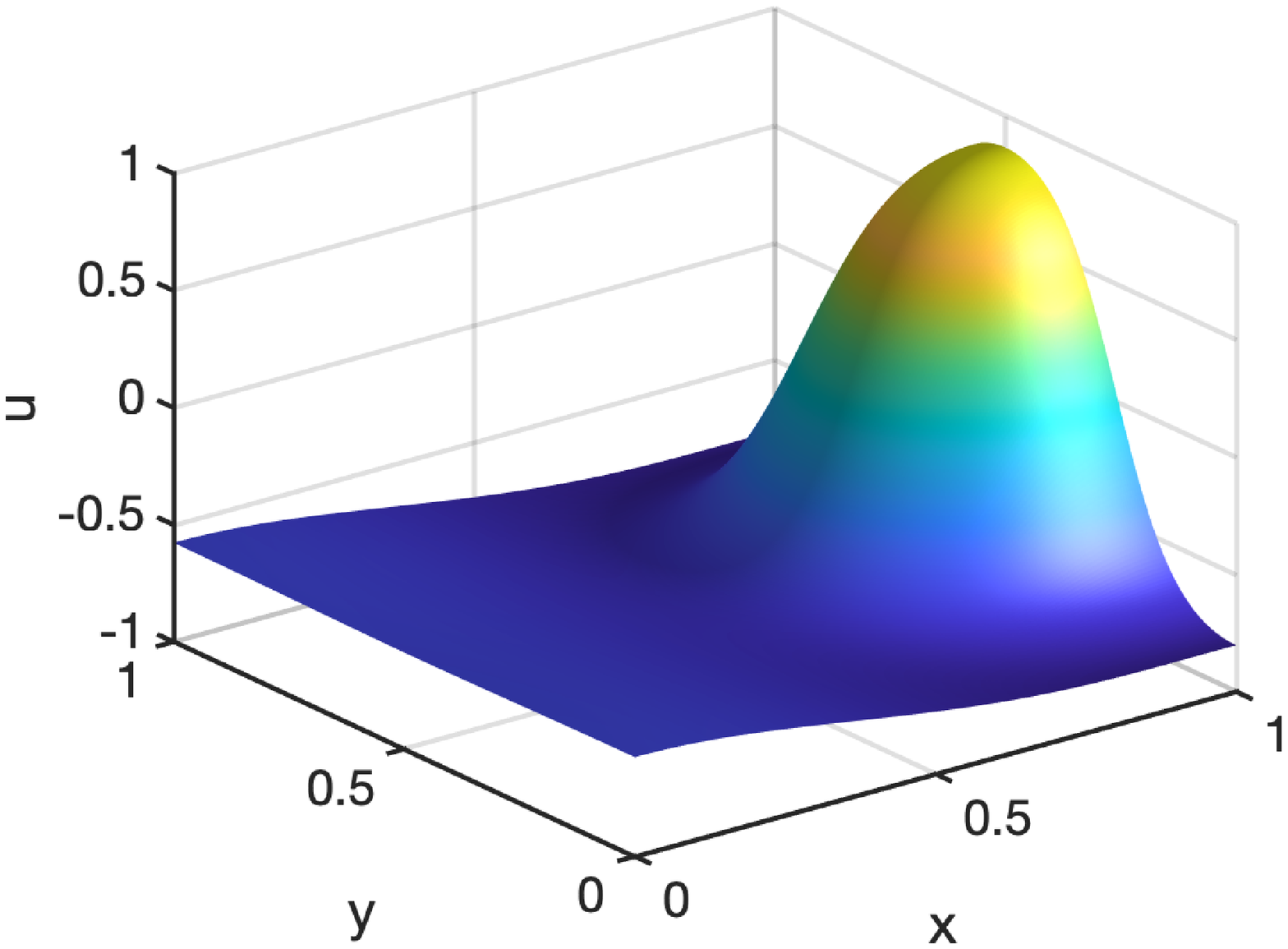}
  \caption{\label{fig:2d}
           Six of the seventeen validated two-dimensional equilibrium solutions.
           For all seventeen solutions we use $\sigma = 6$. Five of these solutions
           are for $\lambda = 75$ and $\mu = 0$ (top left). The rest of them use
           $\lambda = 150$ and $\mu = 0$ (top middle and top right), $\mu =0.1$
           (bottom left), $\mu =0.3$ (bottom middle), and $\mu = 0.5$ (bottom
           right).}
\end{figure}
\begin{table}
  \begin{tabular}{|c|c|c||c|c|c|}
    \hline
    $(\lambda,\mu)$ & $K$ & $N$ & $P$ & $\delta_\alpha$ & $\delta_x$ \\
    \hline\hline            
    $(75,0)$ & 21.1303  & 28 & $\lambda$ & 1.6124e-04   & 0.0020 \\
            &			&& $\sigma$  &6.1338e-05  &0.0020  \\
            &			&& $\mu$     &  5.9914e-07 & 0.0016 \\ \hline
    $(150,0.1)$ & 30.1656  & 72 & $\lambda$ & 1.1833e-05   & 4.7710e-04\\
            &			&& $\sigma$  &5.1514e-06   & 4.7858e-04  \\
            &			&& $\mu$     & 4.4558e-08  &4.2316e-04 \\ \hline
  \end{tabular}
  \vspace*{0.3cm}
  \caption{\label{table:2d}
           A sample of the two-dimensional validation parameters
           for a couple of typical solutions. In all cases, we use
           $\sigma = 6$. Again as in the previous table, we could
           improve results by choosing a larger value of~$N$, but
           in this case since~$N$ is only the linear dimension, the
           dimension of the calculation varies with~$N^2$.}
\end{table}

We have validated ten different equilibrium solutions in one dimension,
shown in Figure~\ref{fig:1d}. Some examples of the associated validation
parameters are presented in Table~\ref{table:1d}. Ideally, we are able to
validate the largest possible $(\delta_\alpha,\delta_x)$-box in which we
can guarantee that the solution exists. However, there is a tradeoff between 
computational cost and optimal bounds. The most computationally costly part
of our estimates is the calculation of~$K_N$, the bound on the inverse of the
linearization of the truncated system. As depicted in Figure~\ref{fig:tradeoff},
the bounds on~$K$, and correspondingly on~$\delta_x$ and~$\delta_\alpha$, depend
significantly on the value of~$N$ that is chosen for the truncation dimension.
Since our goal is to use these validations iteratively for path following, we
will not be able to refine our calculations each time. Therefore as a rule of
thumb for a starting point, we used the equation in Theorem~\ref{thm:k} to
guess that we would have a successful validation for $N \approx C
\|q\|^{1/2}_{H^2}$, where~$C$ is a fixed order one constant. In our
calculations for the ten solutions, this results in a dimension that
varies. For these calculations we chose~$N$ values ranging between~50
and~200. The values of~$M_i$ become progressively larger as you go
from~$\lambda$ to~$\sigma$ to~$\mu$. This means that the corresponding
values of~$\delta_\alpha$ are worse (i.e., smaller), respectively, often
by one or two orders of magnitude. However, the values of~$\delta_x$ for
the three cases are of the same order. While we could increase~$N$ to
improve the estimates, Figure~\ref{fig:tradeoff} shows that there are
diminishing returns on computational investment, and eventually at some~$N$,
we could not have done much better even with a significantly larger value
of~$N$.
\begin{figure} \centering
  \includegraphics[width=0.7\textwidth]{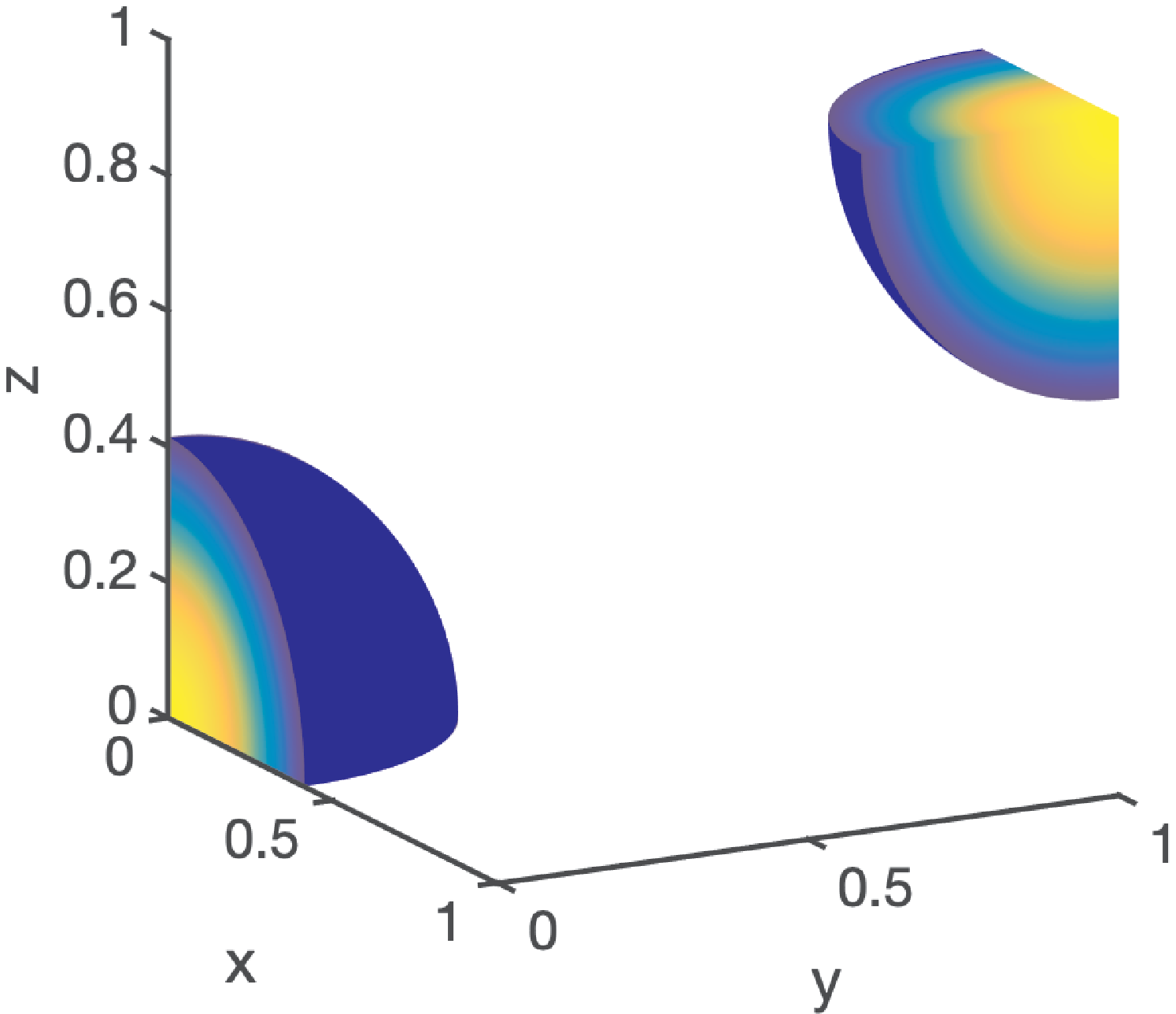}
  \caption{\label{fig:3d}
           A three-dimensional validated solution for the parameter
           values $\lambda = 75$, $\sigma = 6$, and~$\mu = 0$.}
\end{figure}
\begin{table}
  \begin{tabular}{|c|c|c||c|c|c|}
    \hline
    $(\lambda,\sigma,\mu)$ & $K$ & $N$ & $P$ & $\delta_\alpha$ & $\delta_x$ \\           
    \hline\hline
    $(75,6,0)$ & 22.6527 & 22 & $\lambda$ & 0.1143e-04  & 0.5917e-03 \\
            &			&& $\sigma$  & 0.1707e-04   &   0.5955e-03  \\
            &			&& $\mu$     &  0.0010e-04 &  0.4901e-03 \\ \hline
  \end{tabular}
  \vspace*{0.3cm}
  \caption{\label{table:3d}
           Validation parameters for a three-dimensional sample solution.}
\end{table}

In two dimensions, we have validated seventeen different solutions for
varying parameter values. A representative sample are given in
Figure~\ref{fig:2d}, with some sample validation parameters presented
in Table~\ref{table:2d}. Again here, there is a tradeoff between
computational speed and optimal results, but with all of the computations
being significantly longer due to the increased dimension; if the
function~$u$ is encoded by a Fourier coefficient array of size $N \times N$,
then the derivative matrix is of size~$(N^2 - 1)^2$, where the~$-1$ is due
to the fact that we have removed the constant term. As in one dimension,
the resulting~$\delta_\alpha$ values vary significantly, but the~$\delta_x$
values do not. Figure~\ref{fig:3d} and Table~\ref{table:3d} show the details
of a solution which is validated in three dimensions, with much the same
observed behavior. Three-dimensional result validation requires a much
larger computational effort, since if the function~$u$ is given by a
Fourier coefficient array of size $N \times N \times N$, then the
derivative matrices with inverse being approximated are of size~$(N^3-1)^2$.
\section{Conclusions}
\label{sec:theend}
As outlined in more detail in the introduction, in this paper we presented
the theoretical foundations for validating branch segments of equilibrium 
solutions for the diblock copolymer model. Our approach is based on using the 
natural Sobolev norms which are used in the study of the underlying evolution
equation, and they have been derived in all three relevant physical dimensions. As
a side result, we obtained a method based on Neumann series to determine 
rigorous upper bounds on the inverse Fr\'echet derivative of the diblock copolymer 
operator which are of interest in their own right, as they are connected to
the pseudo-spectrum of this non-self-adjoint operator, see~\cite{trefethen:embree:05a}.
Moreover, we have demonstrated briefly in the last section how these results can be
used to obtain computer-assisted proofs for selected diblock copolymer equilibrium
solutions.

While the present paper is a first step towards a complete path-following
framework for the diblock copolymer model in dimensions up to three, there 
are still a number of issues that have to be addressed. On the theoretical side,
one has to develop a pseudo-arclength continuation method with associated linking
conditions which operates in an automatic fashion. This can be done by using
the constructive implicit function theorem as a tool, similar to the applications
to slanted box continuation and limit point resolution which were presented
in~\cite[Sections~2.2 and~2.3]{sander:wanner:16a}. In addition, the bottleneck
in the current validation step is the estimation of the norm bound for the inverse.
Especially in two, and even more so in three dimensions, one has to implement
path-following in such a way that the estimate does not have to be validated at
every step. This can be accomplished via perturbation arguments, and further
speedups are possible by using the sparseness of the involved matrices. However,
all of these issues are nontrivial and lie beyond the scope of the current paper
--- they will therefore be presented elsewhere.
\section*{Acknowledgments}
%
%
We thank the referee for  helpful comments, which improved the quality of this paper.
E.S. and T.W.~were partially supported by NSF grant DMS-1407087.
E.S. was partially supported by NSF grant DMS-1440140 while in
residence at the Mathematical Sciences Research Institute in Berkeley, 
California, during the Fall 2018 semester. In addition, T.W.\ and E.S.\
were partially supported by the Simons Foundation under Awards~581334
and~636383, respectively.

%
%
\bibliography{wanner1a,wanner1b,wanner2a,wanner2b,wanner2c}
\bibliographystyle{abbrv}
\end{document}